\documentclass[AMA,Times1COL]{WileyNJDv5} 

\articletype{Article Type}%

\received{Date Month Year}
\revised{Date Month Year}
\accepted{Date Month Year}
\journal{Journal}
\volume{00}
\copyyear{2024}
\startpage{1}

\usepackage{graphicx}%
\usepackage{multirow}%
\usepackage{amsmath,amssymb,amsfonts}%
\usepackage{amsthm}%
\usepackage{mathrsfs}%
\usepackage{xcolor}%
\usepackage{textcomp}%
\usepackage{manyfoot}%
\usepackage{booktabs}%
\usepackage{algorithm}%
\usepackage{algorithmicx}%
\usepackage{algpseudocode}%
\usepackage{listings}%

\DeclareMathAlphabet{\mathpzc}{OT1}{pzc}{m}{it}
\DeclareMathAlphabet{\mathbf}{OT1}{cmr}{bx}{it}

\newcommand{\spK}{{\cal K}}

\newcommand{\R}{\mathbb{R}}

\newcommand{\Rn}{\mathbb{R}^n}
\newcommand{\Rnn}{\mathbb{R}^{n \times n}}

\newcommand{\C}{\mathbb{C}}

\newcommand{\V}{\mathcal{V}}

\newcommand{\prodM}{{\mathbf m}}

\DeclareMathOperator{\nnz}{nnz}
\DeclareMathOperator{\argmin}{arg\,min}

\numberwithin{theorem}{section}
\numberwithin{corollary}{section}
\numberwithin{proposition}{section}
\numberwithin{figure}{section}
\numberwithin{table}{section}
\newtheorem{example}[theorem]{Example}%

\raggedbottom

\begin{document}

\title{Sketched and truncated polynomial Krylov methods: Evaluation of matrix functions}

\author[1]{Davide Palitta}

\author[2]{Marcel Schweitzer}

\author[1]{Valeria Simoncini}

\authormark{PALITTA \textsc{et al.}}
\titlemark{SKETCHED AND TRUNCATED KRYLOV METHODS FOR MATRIX FUNCTIONS}

\address[1]{\orgdiv{Dipartimento di Matematica}, \orgname{Institution Name}, \orgname{(AM)$^2$, Alma Mater Studiorum - Università di Bologna}, \orgaddress{\postcode{40126} \city{Bologna}, \country{Italy}}}

\address[2]{\orgdiv{School of Mathematics and Natural Sciences}, \orgname{Bergische Universit\"at Wuppertal}, \orgaddress{\postcode{42097} \city{Wuppertal}, \country{Germany}}}

\corres{Davide Palitta \email{davide.palitta@unibo.it}}


\abstract[Abstract]{Among randomized numerical linear algebra strategies, so-called sketching procedures are emerging as effective reduction means to accelerate the computation of Krylov subspace methods for, e.g., the solution of linear systems, eigenvalue computations, and the approximation of matrix functions. While there is plenty of experimental evidence showing that sketched Krylov solvers may dramatically improve performance over standard Krylov methods, many features of these schemes are still unexplored. We derive a new sketched Arnoldi-type relation that allows us to obtain several different new theoretical results. These lead to an improvement of our understanding of sketched Krylov methods{\color{black}, in particular by explaining why the frequently occurring sketched Ritz values far outside the spectral region of $A$ do not negatively influence the convergence of sketched Krylov methods for $f(A)b$. Our findings also help to identify,} among several possible equivalent formulations, the most suitable sketched approximations according to their numerical stability properties. These results are also employed to analyze the error of sketched Krylov methods in the approximation of the action of matrix functions, significantly contributing to the theory available in the current literature.}

\keywords{sketching, randomization, subspace embedding, Krylov subspace method, matrix function, projection method}

\maketitle

\renewcommand\thefootnote{\fnsymbol{footnote}}
\setcounter{footnote}{1}

\section{Introduction}
The field of numerical linear algebra is currently experiencing a shift of paradigm with the advent of fast and scalable randomized algorithms for many core linear algebra problems, the most prominent one probably being the now widely adopted randomized singular value decomposition~\cite{HalkoMartinssonTropp2011}. A specific, important tool that has emerged in recent years in the context of randomized numerical linear algebra is the so-called \emph{sketch-and-solve} paradigm. While initially only used---very successfully---for computations with rather crude accuracy demands, e.g., (low-rank) matrix approximation~\cite{sarlos2006improved,WoolfeLibertyRokhlinTygert2008}, it was recently discovered that---when combined with Krylov subspace methods---it might also be applicable for high accuracy computations as, e.g., the solution of linear systems, eigenvalue computations, and the approximation of matrix functions~\cite{balabanov2022randomized,CortinovisKressnerNakatsukasa2022,GuettelSchweitzer2022,NakatsukasaTropp2021}. The main idea in these methods is to use randomized sketching for reducing the cost of orthogonalization by, e.g., combining it with a truncated Gram--Schmidt process. This often allows a huge boost in performance, as orthogonalization is typically the main bottleneck in polynomial Krylov methods when applied to nonsymmetric problems, particularly on modern high-performance architectures, where each inner product introduces a global synchronization point and communication tends to be the dominating factor. For symmetric matrices, computation of an orthogonal Krylov basis is possible by the short recurrence Lanczos method~\cite{Lanczos1950}, so that the gains one can obtain by reducing the cost of orthogonalization are rather limited. We therefore focus on the nonsymmetric case in the following.

While it was experimentally confirmed that sketched Krylov solvers might dramatically improve performance over standard Krylov methods~\cite{balabanov2022randomized,CortinovisKressnerNakatsukasa2022,GuettelSchweitzer2022,NakatsukasaTropp2021}, many features of these methods are not fully understood yet. In particular, the numerical stability of these methods and how exactly they relate to their non-randomized counterparts needs further analysis. Several recent studies lay a theoretical groundwork with elements of an analysis~\cite{BalabanovNouy2019,CortinovisKressnerNakatsukasa2022,GuettelSchweitzer2022,TimsitGrigoriBalabanov2023}, however, none of these can satisfactorily explain the behavior observed in practice yet, in particular for nonsymmetric problems. {\color{black}E.g., sometimes these methods work incredibly well, essentially reproducing the behavior of the full (non-sketched) Arnoldi method at a fraction of the cost, but in other cases they may fail completely; see, e.g., the experiments reported in~\cite{CortinovisKressnerNakatsukasa2022,GuettelSchweitzer2022}. The currently available theory can neither satisfactorily explain the successful cases, nor the failures.}

In this work, we derive new theoretical results which help better understand the overall behavior of {\color{black}Krylov methods that combine truncated orthogonalization with randomized sketching} and identify which of the several possible (mathematically equivalent) formulations of sketched Krylov approximations can be expected to enjoy the most favorable numerical stability properties. We then use these results to analyze the error of sketched Krylov methods for approximating the action of matrix functions, $f(A)b$, where $A \in \Rnn$ and $b \in \Rn$, significantly contributing to the currently available theory~\cite{CortinovisKressnerNakatsukasa2022,GuettelSchweitzer2022}.

One of our main results is a new Arnoldi-type relation for the
sketched basis {\color{black}(Proposition~\ref{prop:sk_arn_rel})}. This is a general result and it can be used to
analyze many of the sketching approaches obtained so far, from
linear systems to eigenvalue problems. 
We show that full, truncated, and sketched Arnoldi methods are all related by basis transformations and rank-one additions;  {\color{black} see the summary in Table~\ref{tab:summary}}. These explicit relations allow us to derive new algebraic expressions for the distance between the full Arnoldi and sketched approximations to matrix function evaluation, which we analyze in detail {\color{black}(Corollaries~\ref{cor:relation_sketched_full} and~\ref{cor:error_Hd})}. {\color{black}This analysis is based on new results concerning the connection between functions of certain rank-one modified matrices and divided differences (Theorems~\ref{th:errorf} and~\ref{th:ferror_DD}).} We also provide an explanation of the good performance of the so-called sketched-whitened Arnoldi method, in spite of apparently adverse spectral properties of the involved projected matrix{\color{black}; see in particular the family of bounds~\eqref{eq:bound_family}}.


Here is an outline of the paper. In section~\ref{Background material} we recall some background material {\color{black}that will serve as a foundation for our analysis. In particular, we cover} the (full and truncated) Arnoldi method for $f(A)b$ (section~\ref{The Arnoldi relation}), subspace embeddings (section~\ref{Subspace embeddings}) {\color{black}and prior work on sketched Krylov methods (section~\ref{subsec:sketched_fom})}. In section~\ref{subsec:sketched_arnoldi} we show that a new Arnoldi-like relation, that we name the \emph{sketched Arnoldi relation}, holds also when truncated Krylov subspace methods and sketching techniques are combined. Then, in section~\ref{Some general results on functions of rank-1 modified matrices} {\color{black}we show how to rewrite the sketched Arnoldi relation in terms of functions of certain rank-one modified matrices and} present some general results on {\color{black} such matrix functions} that will be employed to relate sketched and non-sketched approximations in section~\ref{Error analysis with respect to the FOM approximation}. The role of eigenvector-related coefficients is further investigated in section~\ref{sec:error_vec}, leading to the counterintuitive observation that the outlying spurious Ritz values generated by the sketched approach do not contribute to the error vector.
In section~\ref{Robust computation of sketched approximations} we discuss some issues related to the stability of the analyzed procedures, while our conclusions are given in section~\ref{Conclusions}.

In the following, the notation $\|\cdot\|$ will be used to denote the Euclidean norm for vectors and the corresponding
induced norm for matrices. Similarly, $\|z\|_\infty:=\max_i|z_i|$ will denote the vector infinity norm whereas $\|T\|_\infty$ the corresponding matrix norm. The identity matrix
will be denoted by $I$, with dimension clear from the context,
while $e_j$ will denote the $j$th column of the identity matrix. We denote the $d$th Krylov subspace corresponding to $A \in \Rnn$ and $b \in \Rn$ by $\spK_d(A,b) = \text{span}\{b, Ab, A^2b, \dots, A^{d-1}b\}$. {Finally,  roman font ($H_d$, $U_d$) will be used for quantities related to a non-orthogonal Arnoldi relation and calligraphic font ($\mathscr{H}_d, \mathscr{U}_d$) for quantities related to a fully orthogonal Arnoldi relation.}
Matlab \cite{matlab7} has been used for all our numerical experiments.

Throughout, exact arithmetic computations will be assumed in our derivations.

\section{Background material}\label{Background material}
In this section we review a few known tools that will be  crucial ingredients in our analysis.

\subsection{The full and truncated Arnoldi method}\label{The Arnoldi relation}
The backbone of most Krylov subspace methods is the Arnoldi process~\cite{Arnoldi1951}, which, given a matrix $A \in \Rnn$, a vector $b \in \Rn$, and a target dimension $d$, computes a nested orthonormal basis $\mathscr{U}_d \in \R^{n \times d}$ of $\spK_d(A,b)$ via the modified Gram--Schmidt orthonormalization and gives rise to the \emph{Arnoldi relation}
\begin{equation}\label{eq:arnoldi_relation}
A\mathscr{U}_d = \mathscr{U}_{d+1}\underline{\mathscr{H}}_d = \mathscr{U}_d \mathscr{H}_d + \mathpzc{h}_{d+1,d}\mathpzc{u}_{d+1}e_d^T,
\end{equation}
where $\underline{\mathscr{H}}_d = [\mathscr{H}_d; \mathpzc{h}_{d+1,d}e_d^T] \in \R^{(d+1) \times d}$, is an unreduced upper Hessenberg matrix containing the orthogonalization coefficients, and
$\mathscr{U}_{d+1}=[\mathscr{U}_d, \mathpzc{u}_{d+1}]$. 

{\color{black} Given the quantities from the Arnoldi relation~\eqref{eq:arnoldi_relation}, one can extract an approximation for $f(A)b$ from the Krylov subspace via
\begin{equation}\label{eq:fullfom}
    f_d^{\textsc{fom}} = \mathscr{U}_d f(\mathscr{H}_d)e_1\|b\|.
\end{equation}
We call this the full Arnoldi (or FOM, in analogy with the Full Orthogonalization Method for linear systems) approximation.}

The main contributions to the computational cost of performing $d$ steps of the Arnoldi method are $d$ matrix vector products with $A$ at a cost of $\mathcal{O}(d \nnz(A))$ floating point operations (flops), assuming that $A$ is sparse with $\nnz(A)$ nonzeros, and the modified Gram--Schmidt orthogonalization which has a cost of $\mathcal{O}(d^2 n)$ flops across all $d$ iterations. Due to its quadratic growth, the cost of orthogonalization quickly dominates the overall cost of the method. In particular, on modern high performance architectures, each inner product introduces a global synchronization point, which leads to a large amount of communication, unless specific ``low-sync'' implementations are used, which come with their own challenges regarding numerical stability; see, e.g, the recent survey~\cite{CarsonLundRozloznikThomas2022}.

{\color{black}An alternative---which is well-explored in the solution of linear systems---is to use a truncated Arnoldi method (see Algorithm~6.6 in~\cite{Saad2003}), i.e., instead of orthogonalizing a new basis vector against all previous basis vectors, one only orthogonalizes it against a fixed number $k$ of previous vectors (with $k = 2$ mimicking the Lanczos method for Hermitian $A$~\cite{Lanczos1950}), thereby overcoming the quadratic growth of the orthogonalization cost. This leads to a \emph{truncated Arnoldi relation}
\begin{equation}\label{eqn:arnoldi_tr}
A U_d = {U}_{d+1} \underline{H}_d,
\end{equation}
with ${U}_{d+1}=[U_d, u_{d+1}]$, where now $\underline{H}_d$ is banded and {\color{black}the columns of $U_d$ are not all mutually orthonormal}. In the following we denote $\underline{H}_d= [H_d; h_{d+1,d}e_d^T]$, where only the last component in the last row is nonzero. Given~\eqref{eqn:arnoldi_tr}, one can compute the truncated FOM approximation
\begin{equation}\label{eq:trfom}
    f_d^{\textsc{tr}} = U_d f(H_d)e_1\|b\|,
\end{equation}
which was, e.g., recently proposed in Algorithm~3.3 of~\cite{CortinovisKressnerNakatsukasa2022}. The price one pays for the reduction in orthogonalization cost when using~\eqref{eq:trfom} is that, typically, convergence is delayed in comparison to the full Arnoldi approximation~\eqref{eq:fullfom}; see, e.g.~\cite{CortinovisKressnerNakatsukasa2022} as well as Example~\ref{ex:whitening} below.}

\subsection{Subspace embeddings}\label{Subspace embeddings}   
{\color{black}The foundations of the sketched Krylov methods discussed in this paper are \emph{subspace embeddings}~\cite{sarlos2006improved,DrineasMahoneyMuthukrishnan2006,woodruff2014sketching}, which embed a low-dimensional subspace $\V$ of $\Rn$ into a smaller space $\mathbb{R}^s$, $s \ll n$, such that inner products (or norms) are distorted in a controlled manner. Specifically, a matrix $S \in \R^{s \times n}$ is called an \emph{$\varepsilon$-subspace embedding for $\V$} (for a given distortion parameter $\varepsilon\in [0,1)$), if
\begin{equation}\label{eq:sketch}
(1-\varepsilon) \| v \|^2 \leq \| S v\|^2 \leq (1+\varepsilon) \|v\|^2,
\end{equation}
for all $v \in \V$. This is equivalent to requiring
\begin{equation}{\label{eq:sketch_innerproduct}}
\langle u, v \rangle - \varepsilon \| u\| \|v\|
            \leq \langle S u, S v \rangle 
            \leq \langle u, v \rangle + \varepsilon \| u\| \|v\|,
\end{equation}
for all $u,v \in \V$ by the parallelogram inequality.}

{\color{black} Constructing $S$ as above requires knowledge of the subspace $\V$ that shall be embedded. In our context, $\V$ will be a Krylov subspace $K_d(A,b)$ which is not known in advance. To still construct an embedding in this case, one can use so-called \emph{oblivious} embeddings. A matrix $S$ is an oblivious $\varepsilon$-subspace embedding for subspaces of dimension $d$, if condition~\eqref{eq:sketch} holds with high probability for \emph{any} subspace $\V$ with $\dim(\V) = d$. To be precise, the inputs for constructing $S$ are the embedding quality $\varepsilon$, the sketching dimension $s$ and an accepted failure probability $\delta$. Based on these, $S$ is constructed using probabilistic methods; see, e.g., \cite{woodruff2014sketching} or section~8 of~\cite{MartinssonTropp2020} for details.}

{\color{black}Frequently, oblivious subspace embeddings are constructed as subsampled trigonometric transforms,
\begin{equation}\label{eq:subsampled_trigonometric}
S = \sqrt{\frac{s}{n}}DNE,
\end{equation}
where $E \in \Rnn$ is a diagonal matrix with Rademacher entries (i.e., the diagonal entries are randomly chosen as $\pm 1$ with equal probability), $D \in \R^{s \times n}$ contains $s$ randomly selected rows of the identity matrix, and $N \in \Rnn$ is a trigonometric transform (e.g., discrete Fourier transform, discrete cosine transform or Walsh--Hadamard transform). One can show that~\eqref{eq:subsampled_trigonometric} with $s = \mathcal{O}(\varepsilon^{-2}(d + \log\frac{n}{\delta})\log\frac{d}{\delta})$ is an oblivious $\varepsilon$-subspace embedding with failure probability $\delta$. With a careful implementation, such $S$ can be applied to a vector in $\mathcal{O}(n\log s)$ flops. In practice, selecting the smaller sketching dimension $s = \mathcal{O}(\varepsilon^{-2}\frac{d}{\delta})$ appears to work very well, except for a few academic examples. We refer to, e.g.,~\cite{tropp2011improved} and section~9 of~\cite{MartinssonTropp2020} for a discussion and experimental evidence.}

{\color{black}As the required sketching dimension $s$ depends quadratically on the embedding quality $\varepsilon$, one typically needs to accept a rather crude embedding quality, such as $\varepsilon = 1/\sqrt{2}$ or $\varepsilon = 1/2$, in order to avoid excessive growth of computational cost.}

{\color{black} To conclude this subsection, we mention that a} subspace embedding $S$ induces a semidefinite inner product
\begin{equation}\label{eq:semidef_inner_product}
    \langle u, v\rangle_S := \langle Su, Sv\rangle.
\end{equation}
From the embedding property, it directly follows that when restricted to $\mathcal{V}$, $\langle \cdot,\cdot\rangle_S$ is an actual inner product; see, e.g.,~section~3.1 in~\cite{BalabanovNouy2019}.

In the remainder of the paper, we often simply call $S$ a ``sketching matrix'', thereby meaning that it is an $\varepsilon$-subspace embedding for a certain Krylov subspace (which will be clear from the context) with a suitable value of $\varepsilon$.

\subsection{The sketched Arnoldi method}\label{subsec:sketched_fom}
{\color{black} As already mentioned in the introduction, a rather novel approach for reducing the orthogonalization cost in Krylov subspace methods via truncated orthogonalization \emph{without} sacrificing convergence speed is to combine the method with oblivious subspace embeddings, leading to so-called \emph{sketched Krylov methods}; see~\cite{NakatsukasaTropp2021} for an application to linear systems and eigenvalue problems as well as~\cite{CortinovisKressnerNakatsukasa2022,GuettelSchweitzer2022} for approximating the action of matrix functions. Similar ideas have also been discussed in~\cite{BalabanovNouy2019} in the context of model order reduction.}

{\color{black}The main idea of the methods in~\cite{GuettelSchweitzer2022,NakatsukasaTropp2021} is to first cheaply construct a non-orthogonal Krylov basis and then subsequently approximately orthogonalize it, working just with sketched vectors of size $s \ll n$ (a process known as \emph{basis whitening}). We explain the methodology for approximations of $f(A)b$, closely following section~2 of~\cite{GuettelSchweitzer2022}. Given a non-orthogonal Krylov basis $U_d$ resulting from a truncated Arnoldi process and a sketching matrix $S$, the ``sketched FOM'' (sFOM) approximation for $f(A)b$ can be defined as
\begin{equation}\label{eq:sfom}
    f_d^{\textsc{sk}} = U_d f\left((SU_d)^\dagger SAU_d\right)(SU_d)^\dagger Sb,
\end{equation}
where $(SU_d)^\dagger$ denotes the Moore-Penrose pseudoinverse of $SU_d$. The basis whitening procedure then consists of computing a thin QR decomposition $S U_d=Q_d T_d$ of the sketched basis and performing the replacements
\begin{equation}\label{eq:basis_whitening_transformations}
SU_d \leftarrow Q_d,  \ \ {}\quad SAU_d  \leftarrow (SAU_d) T_d^{-1}, \ {}\quad U_d \leftarrow U_d T_d^{-1} 
\end{equation}
Note that the matrix $U_d T_d^{-1}$ from the last replacement need not be formed\footnote{Forming this matrix would incur a cost of $\mathcal{O}(d^2n)$ and thus be as expensive as performing the full Arnoldi method in the first place, so this needs to be avoided.}, because inserting~\eqref{eq:basis_whitening_transformations} into~\eqref{eq:sfom} yields
\begin{equation}\label{eq:sfom2}
    f_d^{\textsc{sk}} = U_d \left(T_d^{-1}f\left(Q_d^\ast SAU_dT_d^{-1}\right)Q_d^\ast Sb\right),
\end{equation}
which can be evaluated by first applying $T_d^{-1}$ via back substitution and then multiplying by $U_d$. }

{\color{black}The motivation for using~\eqref{eq:basis_whitening_transformations}--\eqref{eq:sfom2} is that the  transformed basis $U_dT_d^{-1}$ appearing in~\eqref{eqn:WSarnoldi} is orthogonal with respect to the inner product $\langle\cdot,\cdot\rangle_S$ and
is known to be extremely well-conditioned in terms of its {spectral} condition number. This is stated in the following result, which directly follows from Corollary~2.2 in~\cite{balabanov2022randomized}; see also section~4 in~\cite{sohler2011subspace} for related, earlier results in a probabilistic setting.}

{\color{black}%
\begin{proposition}\label{prop:whitenend_basis_conditioning}
If $S$ is an $\varepsilon$-subspace embedding of $\spK_d(A,b)$, then the 2-norm condition number of $U_dT_d^{-1}$ is bounded as
\begin{equation}\label{eq:whitenend_basis_conditioning}
\kappa_2(U_dT_d^{-1}) \leq \sqrt{\frac{1+\varepsilon}{1-\varepsilon}}.
\end{equation}
\end{proposition}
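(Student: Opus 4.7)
The plan is to exploit the fact that the whitening transformation produces a basis whose sketch is \emph{exactly} orthonormal, and then to pull the bound back from the sketched space to the original space via the embedding inequality~\eqref{eq:sketch}.

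First I would set $W := U_d T_d^{-1}$ and observe the crucial algebraic identity $S W = S U_d T_d^{-1} = Q_d T_d T_d^{-1} = Q_d$, so that $SW$ has orthonormal columns. Since the columns of $U_d$ lie in $\spK_d(A,b)$, every vector of the form $W y$ with $y \in \R^d$ also lies in $\spK_d(A,b)$, so the embedding property~\eqref{eq:sketch} applies to it.

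Next I would apply~\eqref{eq:sketch} to $v = Wy$ and use $\norm{SWy} = \norm{Q_d y} = \norm{y}$ to obtain the two-sided bound
\begin{equation*}
(1-\varepsilon)\norm{Wy}^2 \leq \norm{y}^2 \leq (1+\varepsilon)\norm{Wy}^2, \qquad \forall\, y \in \R^d.
\end{equation*}
Reading this as bounds on the Rayleigh quotient $\norm{Wy}/\norm{y}$ yields $\sigma_{\max}(W) \leq 1/\sqrt{1-\varepsilon}$ and $\sigma_{\min}(W) \geq 1/\sqrt{1+\varepsilon}$, and dividing the two bounds concludes the proof.

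The argument is essentially a one-line consequence of the embedding inequality once the identity $SW = Q_d$ is in place, so there is no real obstacle; the only point that deserves care is to check that $W y \in \spK_d(A,b)$ whenever $y \in \R^d$ (so that~\eqref{eq:sketch} is legitimately applicable), which follows directly from $\range(W) = \range(U_d) = \spK_d(A,b)$ under the standing assumption that the truncated Arnoldi process has not broken down before step $d$.
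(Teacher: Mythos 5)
Your proof is correct. The paper does not prove this proposition itself but simply cites \cite[Corollary~2.2]{balabanov2022randomized}; your argument---observing that $S(U_dT_d^{-1}) = Q_d$ is exactly orthonormal and then transferring singular-value bounds back through the embedding inequality~\eqref{eq:sketch}---is precisely the standard derivation underlying that cited result, so it supplies the omitted details rather than taking a different route.
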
}

{\color{black}For example, for a practically reasonable distortion parameter $\varepsilon = 1/\sqrt{2}$, Proposition~\ref{prop:whitenend_basis_conditioning} guarantees that $\kappa_2(U_dT_d^{-1}) \leq 1+\sqrt{2} \approx 2.4142$.}

{\color{black}Numerical experiments in section~5 of~\cite{GuettelSchweitzer2022} show that indeed, the approximation~\eqref{eq:sfom2} often converges remarkably similar to the full Arnoldi approximation despite using the cheaper truncated orthogonalization. It is also observed that the method may sometimes fail completely, though.}

{\color{black}As an alternative to using sketching as a means to retrospectively improve the conditioning of a locally orthogonal Krylov basis, another recently explored avenue for reducing orthogonalization cost and communication in Krylov methods via sketching is to perform a \emph{full} Gram--Schmidt orthogonalization, but with respect to the semidefinite inner product $\langle\cdot,\cdot\rangle_S$ defined in~\eqref{eq:semidef_inner_product} instead of the Euclidean inner product; cf., e.g.,~\cite{balabanov2022randomized,TimsitGrigoriBalabanov2023} for use of this technique in the context of solving linear systems by FOM or GMRES. In Algorithm~3.1 of~\cite{CortinovisKressnerNakatsukasa2022} this approach is extended to approximating the action of matrix functions. By using this approach, inner products in the orthogonalization become cheaper (as they involve only vectors of length $s \ll n$), but the asymptotic cost of the orthogonalization remains at $\mathcal{O}(d^2n)$, just as for full Arnoldi. This approach is therefore mainly attractive when working in a highly parallel computing environment where inner products are the main bottleneck due to expensive communication.}

\section{The sketched Arnoldi relation}\label{subsec:sketched_arnoldi}
{\color{black} In the following, we focus on better understanding the behavior of the sketched Arnoldi approximation~\eqref{eq:sfom} (or equivalently~\eqref{eq:sfom2}). To do so, we start by deriving} a sketched analogue of the Arnoldi relation~\eqref{eq:arnoldi_relation}, {\color{black} which allows to rewrite~\eqref{eq:sfom2} in a way that is both easier to interpret and---more importantly---allows to derive several new theoretical results in later sections of this paper.}

\begin{proposition} \label{prop:sk_arn_rel}
{\color{black} Let ${U}_{d+1}, \underline{H}_d$ be the quantities from a truncated Arnoldi relation~\eqref{eqn:arnoldi_tr} and
let} $SU_{d+1}=Q_{d+1} T_{d+1}$ be a thin QR decomposition with 
\[
Q_{d+1}=[Q_d,q] \text{ and } T_{d+1}=
\begin{bmatrix}
T_d& t\\
0^T &\tau_{d+1}
\end{bmatrix}.
\]
Then the following Arnoldi-like formula holds:
\begin{equation}\label{eq:sketched_arnoldi}
S AU_d = S U_{d} (H_d + r e_d^T) + \tau_{d+1} h_{d+1,d} q e_d^T,
\end{equation}
with $r=h_{d+1,d}T_d^{-1}t$ and $q\perp S U_d$.
\end{proposition}

\begin{proof}
Le $h^T = h_{d+1,d}e_d^T$. We have
\begin{eqnarray}
S AU_d &=& S U_{d+1}\underline{H}_d = Q_{d+1} T_{d+1} \underline{H}_{d} 
 {=} Q_{d+1} \begin{bmatrix} T_d H_d + t h^T \\ [0^T,\tau_{d+1}] \underline{H}_d  \end{bmatrix}\nonumber\\
&=&  Q_d T_d (H_d + T_d^{-1}t h^T)+ \tau_{d+1} q h^T 
=S U_d (H_d + T_d^{-1}t h^T)+ \tau_{d+1} q h^T.\label{eqn:sk-rel}
\end{eqnarray}
The result follows by inserting the definition of $r$.
\end{proof}

The action of sketching appears only in the vector $r$ and in the scalar $\tau_{d+1}$, while the matrix $H_d + r e_d^T$ differs from $H_d$ only in its last column, with a rank-one modification. This will have a quite interesting impact in the following derivations. 

\begin{remark}
Relations of the form~\eqref{eqn:arnoldi_tr} do not only arise in the context of the truncated Arnoldi method, but also in case of other recurrences for generating non-orthogonal Krylov bases, e.g., employing Chebyshev or Newton polynomials; see~\cite{PhilippeReichel2012}. Proposition~\ref{prop:sk_arn_rel} thus also applies to these methods, but we restrict to truncated Arnoldi here for ease of exposition and because it appears to be the most widely used basis generation method in sketched Krylov methods.
\end{remark}

{\color{black}In order to relate Proposition~\ref{prop:sk_arn_rel} to the basis whitening procedure~\eqref{eq:basis_whitening_transformations} outlined in section~\ref{subsec:sketched_fom}, first note that this} procedure was already {\color{black}implicitly} employed in the derivation of the sketched Arnoldi relation {\color{black}due to occurrence of the thin QR decomposition}. The {\color{black}underlying} orthogonalization step can be {\color{black}even} better appreciated by rewriting (\ref{eqn:sk-rel}) as
\begin{eqnarray}\label{eq:whitening_Arnoldi}
S AU_d T_d^{-1} &=& Q_d T_d (H_d + T_d^{-1}t h^T)
T_d^{-1} + \tau_{d+1} q h^T T_d^{-1},
\end{eqnarray}
from which we obtain the {\it whitened-sketched} Arnoldi relation (WS-Arnoldi in short)
\begin{eqnarray}\label{eqn:WSarnoldi}
S A(U_d T_d^{-1})=
Q_d (\widehat H_d + \widehat t e_d^T) +
{\color{black}\frac{h_{d+1,d}\tau_{d+1}}{\tau_d}} q e_d^T,
\end{eqnarray}
where $\widehat H_d=T_d H_d T_d^{-1}$ and $\widehat{t} = (h_{d+1,d}/\tau_{d})t$.

The inclusion of the orthogonalization coefficients $T_d$, and thus the use of~\eqref{eqn:WSarnoldi} in place of~\eqref{eq:sketched_arnoldi},
seems to be very beneficial to the numerical computation. Indeed, computing functions of $H_d + r e_d^T$ appears to be quite sensitive to round-off, due to the growing components of $r$, which lead to poor balancing; see the discussion in Remark~\ref{rem:balancing}. 
Note however, that 
\[
\sqrt{\frac{1-\varepsilon}{1+\varepsilon}}\kappa_2(T_d) \leq \kappa_2(U_d) \leq \sqrt{\frac{1+\varepsilon}{1-\varepsilon}}\kappa_2(T_d),
\]
so that $T_d$ must necessarily be ill-conditioned whenever $U_d$ is ill-conditioned. As $U_d$ results from a truncated orthogonalization process, it will become more and more ill-conditioned as $d$ increases. 
Despite this fact, inversion of $T_d$ can be expected to not be very problematic in the context of sketched Krylov methods, as we discuss in section~\ref{Robust computation of sketched approximations}.

{\color{black}The result of Proposition~\ref{prop:sk_arn_rel} can be used to analyze sketched Krylov methods for the approximation of matrix functions $f(A)b$. Some first steps regarding the analysis of these methods were done in section~3.2 of~\cite{GuettelSchweitzer2022} and section~4 of~\cite{CortinovisKressnerNakatsukasa2022}. Still, sketched Krylov methods for $f(A)b$ are far from being fully understood from a theoretical perspective. We highlight two particular issues that have not been well explored so far. First, sketching might (and often does) introduce ``sketched Ritz values'' which lie very far outside the spectral region of $A$. However, these tend to mostly not negatively influence the performance of the methods. Second, there exist many different formulations for how to compute sketched Krylov approximations, all of them involving computations with potentially very ill-conditioned matrices; cf.~also the discussion in the preceding paragraph. Still, even in the presence of severe ill-conditioning and rank deficiency, sketched Krylov methods can work well in practice. Throughout the next sections, we approach both these issues and derive new theoretical results which allow to better capture the properties of the sketched Arnoldi approximation and better explain the observed behavior. We start by discussing the relation between the sketched Arnoldi approximation and functions of certain rank-one modified matrices in section~\ref{Some general results on functions of rank-1 modified matrices}.}
{\color{black}
\section{The impact of rank-one modifications on matrix functions}\label{Some general results on functions of rank-1 modified matrices}
}
{\color{black}The formula~\eqref{eq:sfom2} defining the sketched Arnoldi approximation is quite hard to interpret and thus not very insightful. In particular, the meaning of the small matrix $Q_d^\ast SAU_dT_d^{-1}$ at which $f$ is evaluated is rather unclear. We now show how to rewrite it using the tools from section~\ref{subsec:sketched_arnoldi} to obtain formulas that are both easier to interpret and more suitable for a subsequent theoretical analysis}.

{\color{black}From the sketched Arnoldi relation~\eqref{eq:sketched_arnoldi} and the fact that $(SU_d)^\dagger = T_d^{-1}Q_d^*$, we directly obtain by straightforward algebraic manipulations that~\eqref{eq:sfom} simplifies to
\begin{equation}\label{eq:sfom_Hrh}
    f_d^{\textsc{sk}} = U_d f(H_d+re_d^T)e_1\|b\|,
\end{equation}
a form much more reminiscent of the standard and truncated FOM approximations~\eqref{eq:fullfom} and~\eqref{eq:trfom}. Using the WS-Arnoldi relation~\eqref{eqn:WSarnoldi}, we can also write
\begin{equation}\label{eq:sfom_whitened_hessenberg}
f_d^{\textsc{sk}} =  U_d T_d^{-1}f\left(\widehat{H}_d + \widehat{t}e_d^T\right)e_1\|Sb\|.
\end{equation}}

{\color{black} As shown above, the sFOM approximation to $f(A)b$ involves a rank-one modification to $H_d$, or a similar matrix.
To better understand the role of such modification}, we begin by stating a quite general result, which provides an explicit relationship between $f(M) \in \R^{d \times d}$ and functions of a rank-one modification of the form $f(M+ve_d^T)$. We assume in the following that $M+ve_d^T$ is diagonalizable with eigenvalue decomposition
\begin{equation}\label{eq:eigenvalue_decomposition}
M + v e_d^T=X\Lambda X^{-1}, \textnormal{ where } \Lambda={\rm diag}(\lambda_1, 
\ldots, \lambda_d),
\end{equation}
and define the corresponding function
\begin{eqnarray}\label{eq:g}
g_v(t)=\sum_{i=1}^d \alpha_i \beta_i f[t,\lambda_i],
\end{eqnarray}
where 
\begin{equation}\label{eq:divdiff_firstorder}
    f[t,\lambda]= \begin{cases}
    f^\prime(t), & \textnormal{if } t = \lambda, \\ 
    \frac{f(t)-f(\lambda)}{t-\lambda}, & \textnormal{otherwise},
    \end{cases}
\end{equation}
denotes a divided difference and $\alpha_i = e_d^T X e_i$, $\beta_i = e_i^T X^{-1} e_1$ with $X$ from the eigenvalue decomposition~\eqref{eq:eigenvalue_decomposition} of $M+ve_d^T$. The subscript in $g_v$ is meant to keep track of the rank-one modification used to generate the spectral quantities defining $g_v$. {\color{black} The proof of this result is given in Appendix~\ref{appendixB}.}

\begin{theorem}\label{th:errorf}
Let $M + v e_d^T$ have the eigendecomposition~\eqref{eq:eigenvalue_decomposition} and assume that $f$ is defined in a region containing the eigenvalues of $M$ and $M + v e_d^T$. Then
\[
f(M + v e_d^T)e_1 - f(M)e_1 = g_v(M) v,
\]
where $g_v$ is defined in~\eqref{eq:g}.
\end{theorem}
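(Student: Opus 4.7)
The plan is to reduce everything to the spectral decomposition of $M+we_d^T$ and then use the identity $(M-\lambda_i I)Xe_i = -\alpha_i w$, which comes directly from comparing columns of $(M+we_d^T)X = X\Lambda$. This pulls the eigenvectors of the perturbed matrix back to the resolvent of $M$ applied to $w$, which is exactly what is needed to recover divided differences.

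First I would write, from the eigendecomposition,
\[
f(M+we_d^T)e_1 = \sum_{i=1}^d f(\lambda_i)\beta_i\, Xe_i, \qquad e_1 = XX^{-1}e_1 = \sum_{i=1}^d \beta_i\, Xe_i,
\]
so that, subtracting $f(M)e_1$ (using $f(M)$'s linearity in the second expression of $e_1$),
\[
f(M+we_d^T)e_1 - f(M)e_1 = \sum_{i=1}^d \beta_i\bigl(f(\lambda_i)I - f(M)\bigr)Xe_i.
\]
Next I would invoke the column identity $(M-\lambda_i I)Xe_i = -\alpha_i w$, which gives $Xe_i = -\alpha_i(M-\lambda_i I)^{-1}w$ whenever $\lambda_i\notin\sigma(M)$. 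Substituting and using the operator identity $(M-\lambda_i I)^{-1}(f(M)-f(\lambda_i)I) = f[M,\lambda_i]$ (which commutes because both factors are functions of $M$), every term reassembles as
\[
\alpha_i\beta_i\bigl(f(M)-f(\lambda_i)I\bigr)(M-\lambda_i I)^{-1}w = \alpha_i\beta_i\, f[M,\lambda_i]\,w,
\]
and summing over $i$ yields $g_w(M)w$, as desired.

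The main obstacle is the degenerate case where $\lambda_i\in\sigma(M)$ for some $i$, so that the resolvent $(M-\lambda_i I)^{-1}$ is not defined even though the divided difference $f[M,\lambda_i]$ still is (via $f'(t)$). I would handle this by a continuity/perturbation argument: both sides of the claimed identity are analytic (and, in particular, continuous) in the entries of $w$ and $M$, and the set of $(M,w)$ for which all $\lambda_i$ are simple and disjoint from $\sigma(M)$ is dense; hence the formula extends from this dense set to the general case by taking limits, the limit on the right-hand side being well-defined precisely because $g_w(t)$ was written in divided-difference form. Alternatively, one could give a contour-integral proof via Sherman--Morrison applied to $(zI-M-we_d^T)^{-1}$, but the spectral route above is more direct and stays closer to the form of $g_w$ used later in the paper.
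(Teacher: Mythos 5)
Your core argument is correct and genuinely different from the paper's proof. The paper proceeds through the resolvent identity $(zI-M-we_d^T)^{-1}e_1 - (zI-M)^{-1}e_1 = (zI-M)^{-1}w\,[e_d^T(zI-M-we_d^T)^{-1}e_1]$, expands the scalar factor in the eigendata of $M+we_d^T$, and then integrates against $f(z)/(2\pi\imath)$ over a contour, invoking the contour representation of divided differences. You instead work entirely at the spectral level: the expansion $e_1=\sum_i \beta_i Xe_i$, the column identity $(M-\lambda_i I)Xe_i=-\alpha_i w$, and the reassembly into $f[M,\lambda_i]$ give the result with no complex analysis beyond what is needed to define $f(M)$, and they show transparently where the products $\alpha_i\beta_i$ come from. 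What the paper's route buys in exchange is the intermediate representation~\eqref{eq:integral_representation}, which is reused verbatim in the proof of Theorem~\ref{th:ferror_DD}; your argument does not produce that object.

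The weak point is your handling of the degenerate case $\lambda_i\in\sigma(M)$. The continuity/density argument is not as innocent as stated: the right-hand side $g_w(M)w$ is defined through the eigendecomposition of $M+we_d^T$, and the quantities $\alpha_i,\beta_i$ are \emph{not} continuous functions of $(M,w)$ near pairs where $M+we_d^T$ has repeated eigenvalues --- which the theorem permits, since only diagonalizability is assumed. Along an approximating sequence with simple eigenvalues, the individual products $\alpha_i^{(k)}\beta_i^{(k)}$ can blow up with cancellation occurring only in the sum, so termwise passage to the limit fails; repairing this essentially forces you back to contour integrals or spectral projectors. Fortunately the detour is unnecessary: never invert $M-\lambda_i I$. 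Since $f[t,\lambda_i]\,(t-\lambda_i)=f(t)-f(\lambda_i)$ holds identically in $t$ (including $t=\lambda_i$, where both sides vanish), the matrix identity $f(M)-f(\lambda_i)I = f[M,\lambda_i]\,(M-\lambda_i I)$ holds whether or not $\lambda_i\in\sigma(M)$. Hence
\[
\beta_i\bigl(f(\lambda_i)I - f(M)\bigr)Xe_i \;=\; -\beta_i\, f[M,\lambda_i]\,(M-\lambda_i I)Xe_i \;=\; \alpha_i\beta_i\, f[M,\lambda_i]\,w,
\]
where the last step uses your column identity directly. Summing over $i$ gives the claim with no genericity assumption, which makes your proof complete, fully elementary, and arguably cleaner than the perturbation argument you proposed.
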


The formula derived in Theorem~\ref{th:errorf} provides insightful information on the
role of the (undesired) eigenvalues of the reduced sketched matrix; see section~\ref{sec:error_vec}.
However, this expression for the matrix function $g_v$ makes it difficult to 
analyze and measure the error between the different function evaluations{\color{black}; see also~\cite{IlicTurnerSimpson2010} where such observations are made for $g_0$}.
The following {\color{black}reformulation leads} to a more
penetrating interpretation, and {\color{black}it} may be read off as a different definition of $g_v${\color{black}; see Appendix~\ref{appendixB} for its proof. Notice that} the second definition in the theorem below is stated in terms of higher-order divided differences, which---among other representations---can be defined recursively based on~\eqref{eq:divdiff_firstorder} (see, e.g., section~5 in~\cite{DeBoor2005}) or via the Cauchy integral formula, as
\begin{equation}\label{eq:integral_higher_order_divdiff}
f[z_1,\dots,z_{d+1}] = \frac{1}{2 \pi \imath} \int_\Gamma \frac{f(z)}{(z-z_1)\cdots(z-z_{d+1})} dz;
\end{equation}
see section~9 in~\cite{DeBoor2005}.

\begin{theorem}\label{th:ferror_DD}
Let $M$ be upper Hessenberg and let $\prodM_d:=\prod_{j=1}^{d-1}M_{j+1,j}$, {\color{black}where $M_{j+1,j}$ denotes the $(j+1,j)$th entry of $M$}. Assume that $M + v e_d^T$ has the eigendecomposition~\eqref{eq:eigenvalue_decomposition}, that all eigenvalues $\{\lambda_1, \ldots, \lambda_d\}$ of $M+v e_d^T$ are simple and that $f$ is defined in a region containing the eigenvalues of $M$ and $M + v e_d^T$. Then
\begin{equation}\label{eqn:ferrnew1}
f(M + v e_d^T)e_1 - f(M)e_1 = 
\prodM_d 
\, \sum_{i=1}^d \frac{1}{\omega^\prime(\lambda_i)} f[M,\lambda_i] v,
\end{equation}
{ with $\omega(z)= \displaystyle\prod_{i=1}^d (z - \lambda_i)$,} from which it also holds
\begin{equation}\label{eqn:ferrnew2}
f(M + v e_d^T)e_1 - f(M)e_1 = 
\prodM_d \, f[M, \lambda_1, \ldots, \lambda_d] v,
\end{equation}
where $f[M, \lambda_1, \ldots, \lambda_d]$ is the order-$(d+1)$ divided difference
of the function $f$.
\end{theorem}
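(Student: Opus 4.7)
The plan is to build on Theorem~\ref{th:errorf}, which already gives
\[
f(M + w e_d^T)e_1 - f(M)e_1 = g_w(M) w = \sum_{i=1}^d \alpha_i \beta_i\, f[M,\lambda_i]\, w,
\]
so the task reduces to re-expressing the scalar weights $\alpha_i\beta_i$ in terms of the Hessenberg data of $M$, and then identifying the resulting sum as a single higher-order divided difference.

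First I would compute the resolvent entry $e_d^T (zI - M - we_d^T)^{-1} e_1$ in two different ways. Using the eigendecomposition~\eqref{eq:eigenvalue_decomposition} it equals $\sum_{i=1}^d \alpha_i\beta_i/(z-\lambda_i)$, exactly as in the proof of Theorem~\ref{th:errorf}. On the other hand, since $M + we_d^T$ shares its subdiagonal with the upper Hessenberg matrix $M$, a direct cofactor computation --- deleting row $1$ and column $d$ of $zI - M - we_d^T$ yields an upper triangular submatrix with diagonal entries $-M_{j+1,j}$, $j=1,\dots,d-1$, and the $(1,d)$-cofactor sign absorbs the resulting $(-1)^{d-1}$ --- gives the standard identity
\[
e_d^T (zI - M - we_d^T)^{-1} e_1 = \frac{\prodM_d}{\omega(z)},
\]
where $\omega(z) = \det(zI - M - we_d^T) = \prod_j (z-\lambda_j)$. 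Since the $\lambda_i$ are simple, partial-fractioning the right-hand side and matching residues with the eigendecomposition-based expression yields
\[
\alpha_i \beta_i = \frac{\prodM_d}{\omega'(\lambda_i)}, \qquad i = 1,\ldots,d.
\]
Plugging this back into Theorem~\ref{th:errorf} gives~\eqref{eqn:ferrnew1} directly.

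To pass from~\eqref{eqn:ferrnew1} to~\eqref{eqn:ferrnew2}, I would invoke the partial-fraction representation of a divided difference at simple nodes,
\[
f[t,\lambda_1,\ldots,\lambda_d] \;=\; \sum_{i=1}^d \frac{f[t,\lambda_i]}{\omega'(\lambda_i)},
\]
which follows from the Lagrange form (equivalently, from the same contour-integral identity already used in the proof of Theorem~\ref{th:errorf}). Substituting the matrix argument $M$ for $t$ --- legitimate because both sides are holomorphic in $t$ in a region containing $\spec(M)$ --- converts the sum in~\eqref{eqn:ferrnew1} into $\prodM_d\, f[M,\lambda_1,\ldots,\lambda_d]\,w$, proving~\eqref{eqn:ferrnew2}.

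The main obstacle is the careful verification of the Hessenberg resolvent identity: the pieces are textbook, but one must track the cofactor sign against the product of subdiagonals picked up when the deleted submatrix becomes triangular. Everything else --- symmetry of divided differences in their arguments, the partial-fraction form, and the transfer of a scalar analytic identity to a matrix argument --- is standard and only needs brief justification.
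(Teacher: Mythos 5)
Your proposal is correct and follows essentially the same route as the paper: both proofs hinge on the Hessenberg resolvent identity $e_d^T(zI-M-we_d^T)^{-1}e_1 = \prodM_d/\omega(z)$ followed by the partial-fraction expansion of $1/\omega$ and the Lagrange form of the order-$(d+1)$ divided difference; the only cosmetic difference is that you match residues after applying Theorem~\ref{th:errorf} (recovering $\alpha_i\beta_i = \prodM_d/\omega'(\lambda_i)$, an identity the paper also records right after the theorem), whereas the paper rewrites the integrand inside the contour integral before evaluating it. Your explicit cofactor computation is, if anything, a more direct justification of the resolvent identity than the paper's appeal to Cayley--Hamilton.
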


\begin{remark}
Higher-order divided differences have been used before in matrix function computations, see, e.g.,~\cite{EiermannErnst2006,TalEzer2007}, where they are employed for representing the error of the (standard) Arnoldi approximation in the context of restarted methods {\color{black}(where, in our notation, the function $g_0$ appears, as no rank-one modifications occur in that setting)}. As the authors of~\cite{EiermannErnst2006,TalEzer2007} observe, the numerical computation of higher-order divided differences can quickly become unstable, so that restarted algorithms based on these error representations did not turn out to be universally applicable, in particular not for ``hard'' problems. Except for~\cite{IlicTurnerSimpson2010}, where a slightly more stable representation based on first-order divided differences was used, similar to our Theorem~\ref{th:errorf}, there has not been too much subsequent work exploring this connection. In section~4 of~\cite{TalEzer2007}, it is also brief\/ly discussed how to use a divided difference representation in order to estimate the norm of the Arnoldi error when $A$ is (close to) normal.

While it might not be generally useful for numerical computations, in the following we rather use the divided difference representation as a theoretical tool which allows to compare different methods and obtain insight into their qualitative behavior. 
\end{remark}

A few comments on the previous results are in order.
By comparing Theorem~\ref{th:errorf} and (\ref{eqn:ferrnew1}), it follows that
for any $i \in \{1, \ldots, d\}$ it holds
$$
\alpha_i \beta_i = \frac{\prod_{j=1}^{d-1}M_{j+1,j}}{\omega^\prime(\lambda_i)}, 
$$
providing some insight into the magnitude of the product $\alpha_i\beta_i$ for outlying
eigenvalues $\lambda_i$. 
In section~\ref{sec:error_vec} the role of $\beta_i$ will be explored in more details.

We now use the above analysis in order to obtain a general bound for the norm of the difference between a function of a matrix and its rank-one modification. This bound emphasizes the role of the function $f$ and the domain on which it acts. The role of the vector $v$ defining the rank-one modification will be discussed in more detail in section~\ref{Error analysis with respect to the FOM approximation}.
The results in Theorem~\ref{th:ferror_DD}
can be conveniently interpreted by using a matrix analog of the well-known relation
between divided differences and differentiation. 
For an analytic function $h$ defined on the field of values of $M$,
we will also make use of the following bound proved by Crouzeix and Palencia \cite{Crouzeix.Palencia.17},
\begin{equation}\label{eq:crouzeix}
\|h(M)\|_2 \le (1+\sqrt{2}) \max_{z\in F(M)} |h(z)|.
\end{equation}
We recall that the field of values of a square matrix $M\in{\mathbb C}^{d\times d}$ is defined as $F(M)=\{z\in{\mathbb C}\,:\, z=(x^*Mx)/(x^*x),\, x\in {\mathbb C}^d,\,x\neq 0\}$.

The following bound can be 
derived from the \emph{Genocchi--Hermite representation} of divided differences; see, e.g., \cite{DeBoor2005}, Appendix~B.16 of~\cite{Higham2008} or Problem~(6.1.43) in \cite{HornJohnson1991}.
\begin{proposition}\label{prop:divdiff_factorial}
Let $D$ be a closed, convex set in the complex plane containing $z_1,\dots,z_{d+1}$ and let $f$ be analytic on $D$. Then
$$
|f[z_1, \ldots, z_{d+1}]| \le \frac{\max_{\zeta \in D} |f^{(d)}(\zeta)|}{d!}.
$$
\end{proposition}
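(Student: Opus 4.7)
The plan is to derive the inequality as a direct consequence of the classical Genocchi--Hermite integral representation of divided differences. Specifically, I would start from the formula
\[
f[z_1, \ldots, z_{d+1}] = \int_{S_d} f^{(d)}\!\left( \sum_{i=1}^{d+1} t_i z_i \right) dt_1 \cdots dt_d,
\]
where $S_d = \{(t_1,\ldots,t_d) \in \R^d : t_i \ge 0,\; \sum_{i=1}^d t_i \le 1\}$ and $t_{d+1} := 1 - \sum_{i=1}^d t_i$, so that the coefficients $t_1,\ldots,t_{d+1}$ are nonnegative and sum to one. This representation is standard for analytic $f$ and can be quoted from the references already mentioned in the paper, e.g., \cite{DeBoor2005} or \cite[Appendix B.16]{Higham2008}.

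Next, I would exploit the two hypotheses in sequence. Convexity of $D$ together with $z_1,\ldots,z_{d+1} \in D$ implies that $\sum_{i=1}^{d+1} t_i z_i \in D$ for every $(t_1,\ldots,t_d) \in S_d$, since this point is a convex combination of the nodes. Because $f$ is analytic on $D$, its $d$th derivative is defined there, and the integrand is thus well-defined on the entire simplex. Applying the triangle inequality (more precisely, the modulus estimate for complex integrals) and using the uniform bound
\[
\left| f^{(d)}\!\left( \sum_{i=1}^{d+1} t_i z_i \right) \right| \leq \max_{\zeta \in D} |f^{(d)}(\zeta)|
\]
reduces the problem to evaluating $\int_{S_d} dt_1 \cdots dt_d$, which is the Lebesgue volume $1/d!$ of the standard $d$-simplex.

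Combining these pieces yields the claimed bound, so once the integral representation is in place the proof is essentially immediate. The only (very minor) obstacle is to invoke the correct form of the Genocchi--Hermite representation in the complex-analytic setting: convexity of $D$ is precisely the hypothesis that guarantees this representation applies, whereas without convexity one would need a contour-based derivation which does not deliver the $1/d!$ factor so cleanly. Since these hypotheses are built into the statement of the proposition, no further technical work is required.
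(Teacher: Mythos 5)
Your proof is correct and takes essentially the same route as the paper: the paper states this proposition without a written proof, presenting it as a direct consequence of the Genocchi--Hermite representation (citing de Boor, Higham, and Horn--Johnson), which is exactly the representation you invoke. Your elaboration---convex combinations of the nodes stay in $D$, the uniform bound on $|f^{(d)}|$, and the simplex volume $1/d!$---is precisely the omitted derivation.
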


{\color{black}From this proposition, we obtain the following bound which will be used in section~\ref{sec:error_vec} below for the sketched Arnoldi approximation.

\begin{corollary}\label{cor:error_divdiff_general}
Let the assumptions of Theorem~\ref{th:ferror_DD} hold and let $D \subseteq \C$ be convex and contain $F(M)$ as well as the nodes $\lambda_1,\dots,\lambda_d$. Then,
\begin{equation}
\|f(M + v e_d^T)e_1 - f(M)e_1\| \le \prodM_d (1+\sqrt{2})\frac{\max_{\zeta \in D} |f^{(d)}(\zeta)|}{d!}\, \|v\|. \label{eq:bound_gMw}
\end{equation}
\end{corollary}
\begin{proof}
Denote
\begin{equation}\label{eq:h}
h = f[\cdot,\lambda_1,\dots,\lambda_d],
\end{equation}
i.e., $h(z)= f[z,\lambda_1,\dots,\lambda_d]$. From~\eqref{eqn:ferrnew2}, we then have
\begin{equation}\label{eqn:bound_h1}
\|f(M + v e_d^T)e_1 - f(M)e_1\| = \prodM_d \|h(M)\|. 
\end{equation}
Due to the assumptions on $D$, we can apply Proposition~\ref{prop:divdiff_factorial} and further use~\eqref{eq:crouzeix} to bound
\begin{equation}\label{eqn:bound_h}
\|h(M)\| \leq (1+\sqrt{2}) \max_{z \in F(M)} |h(z)| \le (1+\sqrt{2})\frac{\max_{\zeta \in D} |f^{(d)}(\zeta)|}{d!}.
\end{equation}
Putting~\eqref{eqn:bound_h1} and~\eqref{eqn:bound_h} together concludes the proof.
\end{proof}}

For bounded $D$, estimates of the form~\eqref{eq:bound_gMw} are insightful for functions such as the exponential, for which it is easy to see that the term $\prodM_d/d!$ will eventually go to zero with growing $d$, as $\prodM_d\le (\|M\|_{\infty})^d$, while all derivatives $f^{(d)}$ remain bounded. Interestingly, this asymptotic behavior agrees with classical bounds for Krylov subspace approximations for the exponential function~\cite{GallopoulosSaad1992}. In the next
section we will show that the error formulas in Theorem~\ref{th:errorf} and Theorem~\ref{th:ferror_DD} are very general, and can  be used to relate the approximation obtained by the fully orthogonal basis with that computed by either the truncated or sketched bases. Hence, the bound in~\eqref{eq:bound_gMw}, although rather pessimistic in general, ensures that the two quantities $f(M + v e_d^T)e_1$ and $f(M)e_1$ will coincide in exact arithmetic, for large enough $d$.

\begin{example}\label{ex:toepl}
We consider the following problem  with the Toeplitz matrix $M_d$, where the subscript stresses the dependence on the dimension $d$,
{\noindent\scriptsize
\begin{verbatim}
for d=5:5:30
    M=toeplitz( [-4, 2, zeros(1,d-2)], [-4, 1/2, 1/2, zeros(1,d-3)]);
    rng(21*pi); e1=eye(d,1);
    v=flipud(linspace(1,20,d))'.*randn(d,1);
    Mhat=M+v*[zeros(1,d-1),1];
    y1=expm(M)*e1;
    y1s=expm(Mhat)*e1;
end
\end{verbatim}
}
\vskip 0.05in
{These data were chosen for full reproducibility. We also consider the case where the Toeplitz matrix $M$ is substituted by the true upper Hessenberg matrix obtained with the full Arnoldi recurrence at the given iteration {\tt d}. For this latter case, the Arnoldi iteration was used with $b$ equal to a unit norm constant vector of dimension 2500, and $A$ the finite difference discretization of the operator $\Delta u -10xu_x -10yu_y$, subject to zero boundary conditions. }
{The left plot of } 
Figure~\ref{fig:asympt} shows the error norm $\|\exp(M_d + v_d e_d^T)e_1 - \exp(M_d)e_1\|$ and the bound $(1+\sqrt{2})\|v{\color{black}_d}\|\prodM_d/d!$.
Since $\prodM_d$ is moderate, the convergence slope of the asymptotic curve nicely follows that of the true error. {The right plot shows an analogous performance for the matrix obtained via an Arnoldi procedure.}
\end{example}

\begin{figure}[tbhp]
 \centering
\includegraphics[height=.38\textwidth,width=0.48\textwidth]{./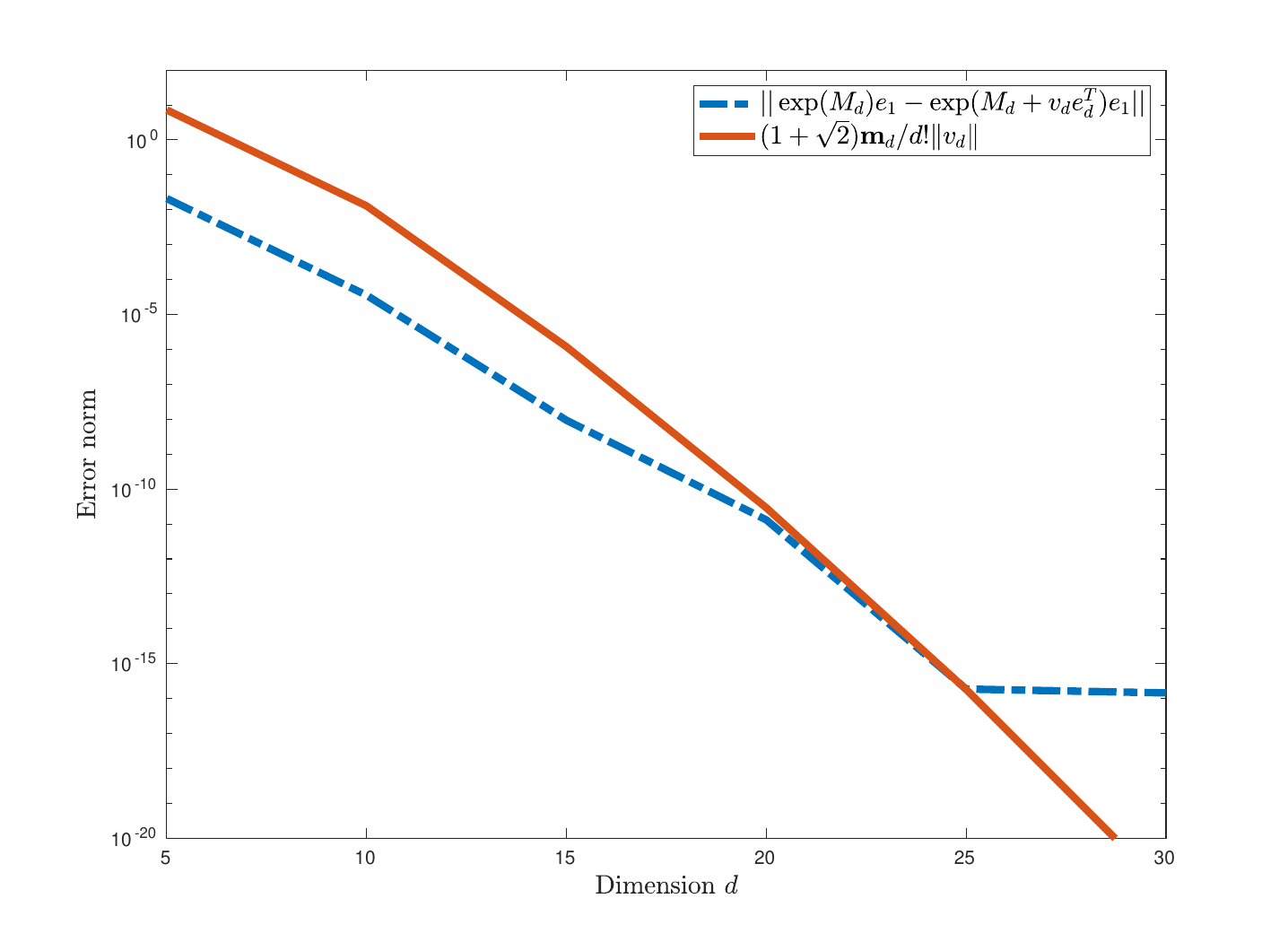}
\includegraphics[height=.38\textwidth,width=0.48\textwidth]{./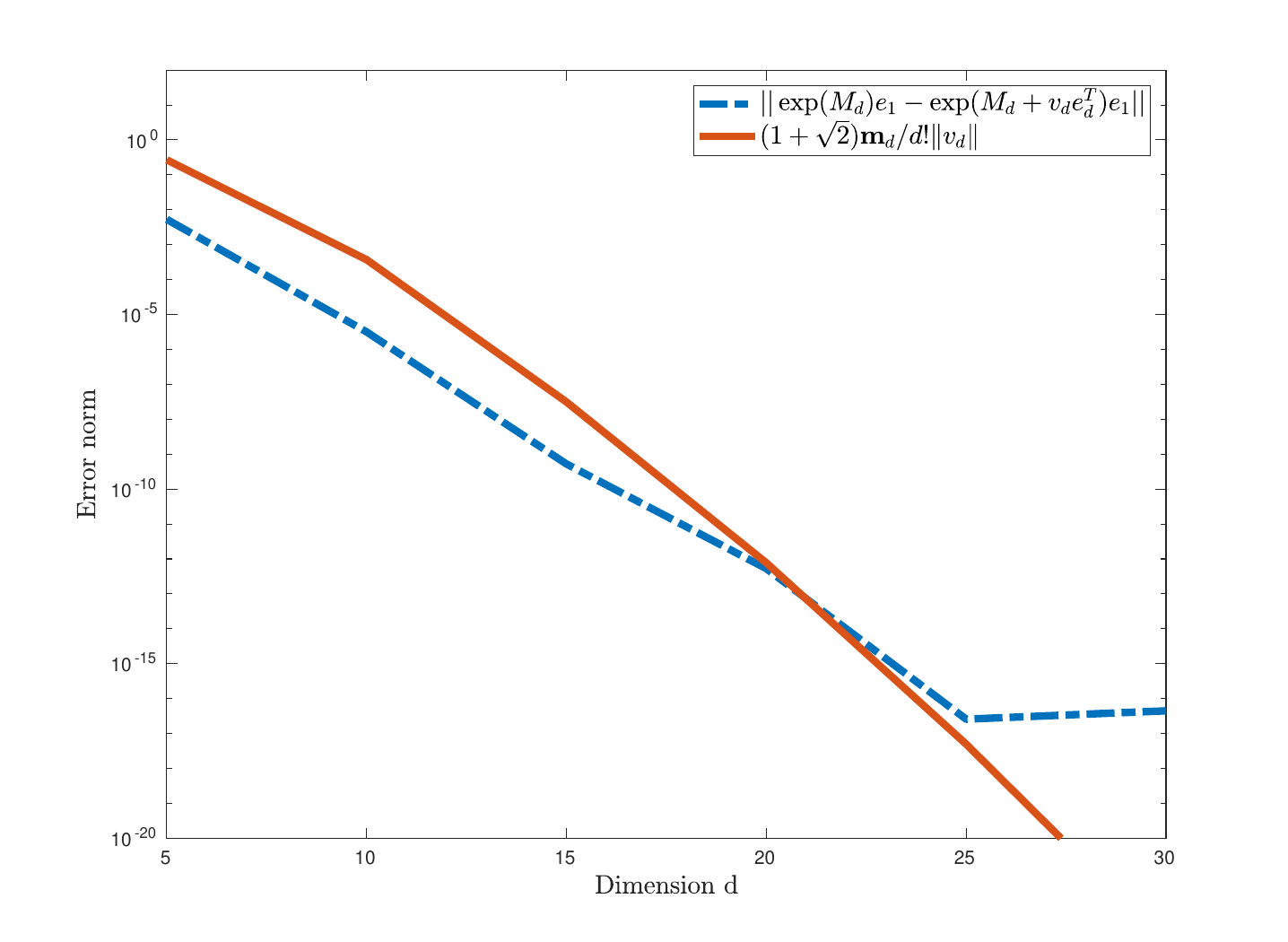}
  \caption{Example~\ref{ex:toepl}. True error $\|\exp(M_d + v_d e_d^T)e_1 - \exp(M_d)e_1\|$
(dashed line)  as the dimension $d$ grows, and bound
  $(1+\sqrt{2})\|v{\color{black}_d}\|\prodM_d/d!$ (solid line). {Left: Toeplitz matrix. Right: $M_d$ obtained by the Arnoldi recurrence with $A$ from discretized PDE problem.}
\label{fig:asympt}}
\end{figure}

For functions other than the exponential the bound in (\ref{eq:bound_gMw}) may not be significant. 
As an example, for $z^{-1/2}$ the formula for the derivative contains a factor that essentially grows as the factorial.
To provide further insight, we resort to representations involving more general
tools that take into account the properties of divided differences in complex domains.
More precisely, we can use a result proved by Curtiss in 1962. We first recall that
for a bounded region  $D$ of the complex plane whose boundary is an admissible Jordan curve, it is possible to define a function
$z=\chi(\omega)=\mu \omega +\mu_0+ \frac{\mu_1}{\omega}+\frac{\mu_2}{\omega^2}+\cdots$, where $\mu>0$ is called the capacity of the Jordan curve, 
and $\omega$ is defined outside the unit circle. The function $\chi$ maps conformally the region outside the
unit circle into the exterior of $\bar D$.
Setting $\omega=r \exp(i \theta)$, there exists a maximum value of $r$, that is $\rho>1$, such that
$f$ is analytic on the region  between the exterior of $D$ and
the curve $C_r=\{z=\chi(r \exp(i \theta) )\}$, with $\theta \in [0, 2\pi]$.
We refer
to \cite{Curtiss.62} for a more comprehensive  description of the notation.
\begin{theorem}(Theorem~4.1 in~\cite{Curtiss.62})\label{thm:curtiss}
With the above notation, let $D$ be a bounded region of the complex plane whose boundary is an admissible Jordan curve,
and let $\mu$ be the capacity of $\partial D$.
Let $f$ be analytic on $C=\bar D$ and let the sequence of $\{\lambda_1, \ldots, \lambda_d\}$ be equidistributed
on $\partial D$. Let $\rho>1$ be the largest value such that $f$ is analytic in the
extended curve $C_{\rho}$ containing $C$.
Then for any $r$, $1< r <\rho$, 
$$
|f[\lambda_1, \ldots, \lambda_d]| \le 
\frac{\ell_{C_r}}{2\pi(\mu r)^d}\max_{z\in C_r}|f(z)|,
$$
where
$\ell_{C_r}$ is the length of the curve
$C_r$.
\end{theorem}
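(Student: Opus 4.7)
The plan is to reduce the statement to the Cauchy-type contour representation of a divided difference, where the denominator is then controlled by a potential-theoretic lower bound coming from the conformal map $\chi$.

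First, I would invoke the integral representation of divided differences already used earlier in the paper (see, e.g., \cite[Eq.~(51)]{DeBoor2005}): since $r<\rho$, the function $f$ is analytic in a neighborhood of the closed region bounded by $C_r$, and $C_r$ encloses $\bar D\supset\{\lambda_1,\ldots,\lambda_d\}$, so
\begin{equation*}
f[\lambda_1,\ldots,\lambda_d] \;=\; \frac{1}{2\pi \mathrm{i}}\oint_{C_r}\frac{f(z)}{\omega_d(z)}\,\mathrm{d}z, \qquad \omega_d(z):=\prod_{i=1}^d(z-\lambda_i).
\end{equation*}
Applying the ML-inequality to this representation immediately produces the factors $1/(2\pi)$, $\ell_{C_r}$, and $\max_{z\in C_r}|f(z)|$ appearing on the right-hand side of the claim, so the only remaining task is to establish the uniform lower bound
\begin{equation*}
|\omega_d(z)| \;\ge\; (\mu r)^d \qquad \text{for every } z \in C_r.
\end{equation*}

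Next, I would derive this bound from the conformal map $\chi$ and the equidistribution hypothesis. Writing $z=\chi(re^{\mathrm{i}\theta})$, the curve $C_r$ is the $\chi$-image of the circle of radius $r$; the Green function of the unbounded component of $\mathbb{C}\setminus\bar D$ with pole at infinity equals $\log|\chi^{-1}(z)|$, which is identically $\log r$ on $C_r$, and the Robin constant of $\partial D$ is $-\log\mu$. Consequently the logarithmic potential of the equilibrium measure of $\partial D$ takes the value $-\log(\mu r)$ everywhere on $C_r$. Curtiss's notion of ``equidistributed'' family is engineered so that the discrete average $\tfrac{1}{d}\sum_i\log|z-\lambda_i|$ matches this equilibrium potential; concretely, he takes $\lambda_j$ as the $\chi$-image of the $d$-th roots of unity, and a Laurent expansion of $\chi$ then gives
\begin{equation*}
\prod_{j=1}^d\bigl(z-\chi(e^{2\pi \mathrm{i} j/d})\bigr) \;=\; \bigl(\mu\,\chi^{-1}(z)\bigr)^d + \textnormal{lower-order terms in } \chi^{-1}(z),
\end{equation*}
from which $|\chi^{-1}(z)|=r$ on $C_r$ together with a Rouch\'e-type estimate delivers the required $|\omega_d(z)|\ge(\mu r)^d$.

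Combining this lower bound with the ML-estimate yields precisely
\begin{equation*}
|f[\lambda_1,\ldots,\lambda_d]| \;\le\; \frac{\ell_{C_r}}{2\pi(\mu r)^d}\max_{z\in C_r}|f(z)|,
\end{equation*}
which is the assertion of the theorem. The hard part will be the second step: for the explicit $\chi$-image nodes the bound reduces to an algebraic identity controlled by a single higher-order correction, but for a genuinely arbitrary ``equidistributed'' family one needs the uniform convergence of the discrete logarithmic potential $\tfrac{1}{d}\sum_i\log|z-\lambda_i|$ to the equilibrium potential, plus enough care to turn an asymptotic equality into a pointwise inequality with constant exactly $(\mu r)^d$. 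I would therefore follow Curtiss's concrete construction, for which the argument is essentially elementary, and then recover the general case by an approximation argument between an arbitrary equidistributed family and the canonical Fej\'er-type one.
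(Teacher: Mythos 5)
The first thing to note is that the paper contains no proof of this statement to compare against: it is imported verbatim, with citation, from Curtiss's 1962 paper, and is invoked later only in an asymptotic sense (note the ``$\sim$'' in the display that follows it). So your proposal must stand on its own. Its first step is correct and is undoubtedly the right skeleton: for $1<r<\rho$ the function $f$ is analytic on the closed region bounded by $C_r$, all nodes lie inside $C_r$, and the Hermite contour representation together with the ML inequality gives
\begin{equation*}
|f[\lambda_1,\ldots,\lambda_d]|\;\le\;\frac{\ell_{C_r}}{2\pi}\,\frac{\max_{z\in C_r}|f(z)|}{\min_{z\in C_r}|\omega_d(z)|},
\qquad \omega_d(z)=\prod_{i=1}^d(z-\lambda_i).
\end{equation*}

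The genuine gap is the key lemma you then need, namely the pointwise bound $|\omega_d(z)|\ge(\mu r)^d$ for all $z\in C_r$: this is false, and it fails already in the canonical configuration to which you propose to reduce the general case. Take $D$ the closed unit disk, so that $\chi(\omega)=\omega$ and $\mu=1$, and take the Fej\'er nodes, i.e.\ the $d$-th roots of unity. Then $\omega_d(z)=z^d-1$, and at the point $z=r$ of $C_r$ we get $|\omega_d(r)|=r^d-1<(\mu r)^d$. No Rouch\'e-type argument can repair this: the lower-order correction pushes $|\omega_d|$ \emph{below} $(\mu r)^d$ on part of $C_r$, and Rouch\'e-type reasoning yields zero counts or lower bounds of the form $(\mu r)^d-|\mathrm{corrections}|$, never a one-sided bound by $(\mu r)^d$ itself. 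What equidistribution does guarantee (essentially by definition, via uniform convergence of the discrete logarithmic potential to the equilibrium potential, as you say) is $\frac{1}{d}\log|\omega_d(z)|\to\log(\mu r)$ uniformly on $C_r$, i.e.\ $|\omega_d(z)|\ge(\mu r)^d e^{-d\epsilon_d}$ with $\epsilon_d\to 0$. Feeding this into the ML estimate proves the theorem only up to a multiplicative factor $e^{d\epsilon_d}=e^{o(d)}$ --- the correct exponential rate $(\mu r)^{-d}$, which is all the paper actually uses, but not the clean finite-$d$ inequality stated. Indeed, in the disk example the ML route can never give better than $r\max_{|z|=r}|f|/(r^d-1)$, which is strictly larger than the asserted $\max_{|z|=r}|f|/r^{d-1}$ for every $d$; closing that remaining gap requires exploiting cancellation in the contour integral rather than bounding the integrand by the minimum of $|\omega_d|$, and that finer mechanism is precisely what is delegated to Curtiss's original proof. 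Your concluding plan (passing from the canonical nodes to an arbitrary equidistributed family by approximation) inherits the same difficulty, since plain equidistribution provides no rate for $\epsilon_d$.
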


{Combining the result of Theorem~\ref{thm:curtiss} with \eqref{eqn:ferrnew2} and the first inequality in~\eqref{eqn:bound_h}, we obtain the following bound, where $D$ can be taken as the convex set above,}
\begin{eqnarray*}
\|f(M + v e_d^T)e_1 - f(M)e_1\| &\le&
\prodM_d (1+\sqrt{2}) \max_{z \in F(M)} 
|f[z,z_1, \ldots, z_d]| \, \|v\| \\
&\sim& 
\ell_{C_r}
\frac{\prodM_d}{(\mu r)^d}
\max_{z\in C_r}|f(z)| \, \|v\|.
\end{eqnarray*}

Clearly, the above analysis assumes an idealized setting (e.g., in reality, the $\lambda_i$ will not be equidistributed on $\partial D$) and additionally, the obtained bound cannot be expected to be sharp and descriptive of the actual difference of the two approximations. It should merely be seen as a first hint at what kind of results might be possible to obtain for general $f$.
However, such  more refined results will require advanced tools from complex analysis which are well beyond the scope of the current work.

\section{Error analysis with respect to the FOM approximation}\label{Error analysis with respect to the FOM approximation}
We can now use Theorem~\ref{th:errorf} to relate the different possible sketched and truncated Krylov approximations to each other, as they all result from rank-one modifications of one another. Note that for the sketched Krylov approximation, we will use the representation~\eqref{eq:sfom_Hrh} for analysis purposes (as it directly uses the matrix $H_d$), although the representation~\eqref{eq:sfom_whitened_hessenberg} is better suited for actual computations, as we will illustrate in Example~\ref{ex:whitening} below. Before stating our results, let us introduce yet another way of writing the sketched or truncated Krylov approximations: As both $U_d$ and $\mathscr{U}_d$ contain nested bases of $\spK_d(A,b)$, there exists a nonsingular upper triangular matrix $\mathscr{T}_d$ such that $U_d = \mathscr{U}_d\mathscr{T}_d$; the matrix $\mathscr{T}_d$ can, e.g., be obtained via a Gram--Schmidt based QR decomposition. We write
\begin{equation}\label{eq:curlyT}
\mathscr{T}_{d+1}=
\begin{bmatrix}
\mathscr{T}_{d}& \mathpzc{t}\\
0^T &\widehat{\tau}_{d+1}
\end{bmatrix}.
\end{equation}
Using this, we can write the sketched FOM approximation in terms of the orthogonal Krylov basis $\mathscr{U}_d$ as
\begin{equation}\label{eq:sFOM_orth_basis1}
f_d^{\textsc{sk}} = \mathscr{U}_d f(\mathscr{T}_dH_d\mathscr{T}_d^{-1} + \mathscr{T}_d r e_d^T \mathscr{T}_d^{-1})\mathscr{T}_de_1 \|b\|.
\end{equation}
Now, from~\eqref{eqn:arnoldi_tr} and the fact that the columns of $\mathscr{U}_d$ are orthonormal it follows by straightforward algebraic manipulations that 
\begin{equation}\label{eq:sFOM_orth_basis2}
\mathscr{T}_dH_d\mathscr{T}_d^{-1} = \mathscr{H}_d - h_{d+1,d}\mathscr{U}_d^Tu_{d+1}e_d^T\mathscr{T}_d^{-1}.
\end{equation}
Thus, the matrix at which $f$ is evaluated in~\eqref{eq:sFOM_orth_basis1} is a rank-one modification of $\mathscr{H}_d$. Further simplification is possible by noting that the first columns of $U_d$ and $\mathscr{U}_d$ are identical, so the top-left entry of $\mathscr{T}_d$ can be chosen to be $1$ and furthermore, $e_d^T\mathscr{T}_d^{-1} = e_d^T/\widehat{\tau}_d$. Using these observations together with~\eqref{eq:sFOM_orth_basis2} allows to rewrite~\eqref{eq:sFOM_orth_basis1} as
\begin{equation}\label{eq:sFOM_orth_basis3}
f_d^{\textsc{sk}} = \mathscr{U}_d f(\mathscr{H}_d + \widehat{r} e_d^T)e_1 \|b\|,
\end{equation}
where we introduced the shorthand notation 
\begin{align}
\widehat{r} &= (\mathscr{T}_d r - h_{d+1,d}\mathscr{U}_d^Tu_{d+1})/\widehat{\tau}_d \nonumber\\
&= (\mathscr{T}_d T_d^{-1}t - \mathscr{U}_d^Tu_{d+1})(h_{d+1,d}/\widehat{\tau}_d) \label{eq:r_hat2}\\
&= (\mathscr{T}_d T_d^{-1}t - \mathpzc{t})(h_{d+1,d}/\widehat{\tau}_d),\label{eq:r_hat}
\end{align} 
with $\mathpzc{t}$ coming from~\eqref{eq:curlyT}.

By following the same steps for the truncated FOM approximation, we find the representation
\begin{equation}\label{eq:trfom_orth_basis}
f_d^{\textsc{tr}} = \mathscr{U}_d f(\mathscr{H}_d - h_{d+1,d}\mathpzc{t}e_d^T)e_1\|b\|.
\end{equation}

{\color{black} Due to the many different possible formulations, we summarize all important matrix function approximations  (both in the truncated and fully orthonormal bases) in Table~\ref{tab:summary} to provide the reader with an overall picture. Among the various sketched forms we have discussed, we report here only those
that appear to be most reliable for practical purposes.\footnote{For a matrix analysis, the truncated-Arnoldi form of the sketched approximation, namely $f_d^{\textsc{sk}} = U_d f\left({H}_d + {r}e_d^T\right)e_1\|b\|$, would have several advantages. However, as we illustrate in Example~\ref{ex:whitening} below, this form may showcase a quite erratic numerical behavior, compared with the stabilized one reported in the table.}

\begin{table}
\caption{\color{black}Summary of analyzed matrix  function approximations, in the truncated and fully orthonormal bases. \label{tab:summary}  }
\centering
\begin{tabular}{|c|cc|cc|}
\hline
approx        & \multicolumn{2}{c|}{truncated basis $U_d$}  &  \multicolumn{2}{c|}{fully orthonormal basis $\mathscr{U}_d$} \\
\hline
 & & & & \\
$f_d^{\textsc{fom}}$ &  --&  & $\mathscr{U}_d f(\mathscr{H}_d )e_1\|b\|$ & (\ref{eq:fullfom})\\
 & & & & \\
$f_d^{\textsc{tr}}$ & $U_d f(H_d)e_1\|b\|$ & (\ref{eq:trfom}) & 
$\mathscr{U}_d f(\mathscr{H}_d - h_{d+1,d}\mathpzc{t}e_d^T)e_1\|b\|$&  (\ref{eq:trfom_orth_basis}) \\
 & & & & \\
$f_d^{\textsc{sk}}$ &  
$U_d T_d^{-1}f\left(\widehat{H}_d + \widehat{t}e_d^T\right)e_1\|Sb\|$&  (\ref{eq:sfom_whitened_hessenberg}) & 
$\mathscr{U}_d f(\mathscr{H}_d + \widehat{r} e_d^T)e_1 \|b\|$ & (\ref{eq:sFOM_orth_basis3})       \\
 & & & & \\
\hline
\end{tabular}
\end{table}}

{\color{black}For studying the a-priori error with respect to the ideal FOM approximation, writing the truncated and the sketched approximations in terms of the fully orthonormal basis $\mathscr{U}_d$ (third column
in Table~\ref{tab:summary}) leads to more insightful formulas that we also exploit in the results below. It should be kept in mind, however, that in our setting the formulation of sketched and truncated approximations via the fully orthonormal basis is not computationally affordable.}

\vskip 0.1in
\begin{corollary}\label{cor:relation_sketched_full}
The sketched and full Arnoldi approximations $f_d^{\textsc{sk}}$ and $f_d^{\textsc{fom}}$ from~\eqref{eq:sfom_Hrh} and~\eqref{eq:fullfom} fulfill
\begin{equation}\label{eq:fullfom_nonorthogonal_basis}
f_d^{\textsc{sk}}-f_d^{\textsc{fom}} = \mathscr{U}_d g_{\widehat{r}}(\mathscr{H}_d)\widehat{r}\|b\|,
\end{equation}
where $\widehat{r}$ is defined in~\eqref{eq:r_hat} and $g_{\widehat{r}}$ is defined in~\eqref{eq:g}, with respect to the eigendecomposition of $\mathscr{H}_d+\widehat{r}e_d^T$.
\end{corollary}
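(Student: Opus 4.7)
The plan is to combine the representation~\eqref{eq:sFOM_orth_basis3} of $f_d^{\textsc{sk}}$ in terms of the fully orthogonal Arnoldi basis with Theorem~\ref{th:errorf}, since both approximations are now written in the form $\mathscr{U}_d f(\cdot)e_1\|b\|$ with the inner matrices differing only by the rank-one term $\widehat{r}e_d^T$.

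Concretely, first I would subtract $f_d^{\textsc{fom}}$ in the form~\eqref{eq:fullfom} from $f_d^{\textsc{sk}}$ in the form~\eqref{eq:sFOM_orth_basis3}, factoring out the shared left factor $\mathscr{U}_d$ and the shared scalar $\|b\|$ to obtain
\[
f_d^{\textsc{sk}}-f_d^{\textsc{fom}} = \mathscr{U}_d\bigl(f(\mathscr{H}_d + \widehat{r}e_d^T)e_1 - f(\mathscr{H}_d)e_1\bigr)\|b\|.
\]
Second, I would apply Theorem~\ref{th:errorf} with $M = \mathscr{H}_d$ and $w = \widehat{r}$, which delivers exactly the identity
\[
f(\mathscr{H}_d + \widehat{r}e_d^T)e_1 - f(\mathscr{H}_d)e_1 = g_{\widehat{r}}(\mathscr{H}_d)\widehat{r},
\]
with $g_{\widehat{r}}$ the divided-difference function defined in~\eqref{eq:g} relative to the eigendecomposition of the rank-one update $\mathscr{H}_d + \widehat{r}e_d^T$. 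Substituting this identity into the previous display produces the claim.

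The bulk of the work for this corollary is already done in deriving the compact representation~\eqref{eq:sFOM_orth_basis3}, so what remains is essentially bookkeeping. The only real obstacle is verifying that Theorem~\ref{th:errorf} actually applies: that $f$ is defined on the spectra of both $\mathscr{H}_d$ and $\mathscr{H}_d+\widehat{r}e_d^T$ is inherited from the well-definedness of the two approximations $f_d^{\textsc{fom}}$ and $f_d^{\textsc{sk}}$, while diagonalizability of $\mathscr{H}_d+\widehat{r}e_d^T$ is the only genuine additional hypothesis. This holds generically, and in the non-generic case a standard continuity/perturbation argument (slightly perturbing $\widehat{r}$ so that all eigenvalues become simple and then passing to the limit using analyticity of $f$ on a neighborhood of the spectrum) recovers the stated formula.
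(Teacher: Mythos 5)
Your proposal is correct and follows exactly the paper's route: the paper's proof consists of the single line ``apply Theorem~\ref{th:errorf} with $M = \mathscr{H}_d$ and $w = \widehat{r}$,'' relying implicitly on the representation~\eqref{eq:sFOM_orth_basis3} derived just before the corollary. Your additional remarks on verifying the hypotheses of Theorem~\ref{th:errorf} (well-definedness of $f$ and handling non-diagonalizable cases by perturbation) are sensible elaborations the paper leaves tacit.
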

\begin{proof}
The result follows by applying Theorem~\ref{th:errorf} with $M = \mathscr{H}_d$ and $v = \widehat{r}$.
\end{proof}

\vskip 0.1in
\begin{corollary}\label{cor:relation_truncated_full}
The truncated and full Arnoldi approximations $f_d^{\textsc{tr}}$ and $f_d^{\textsc{fom}}$ from~\eqref{eq:trfom} and~\eqref{eq:fullfom} fulfill
\[
f_d^{\textsc{tr}}-f_d^{\textsc{fom}} = \mathscr{U}_d g_y(\mathscr{H}_d)y\|b\|,
\]
where $y = - h_{d+1,d}\mathpzc{t}$ and $g_y$ is defined in~\eqref{eq:g}, with respect to the eigendecomposition of $\mathscr{H}_d+ye_d^T$.
\end{corollary}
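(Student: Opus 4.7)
The plan is to mirror the strategy used for Corollary~\ref{cor:relation_sketched_full}: first rewrite $f_d^{\textsc{tr}}$ in terms of the orthonormal Krylov basis $\mathscr{U}_d$, identify the resulting argument of $f$ as a rank-one modification of $\mathscr{H}_d$, and then invoke Theorem~\ref{th:errorf} on the difference. Fortunately, the rewriting has already been carried out in the text preceding the corollary, culminating in~\eqref{eq:trfom_orth_basis}, so the proof reduces to combining that identity with the general result on rank-one modifications.

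More concretely, I would start from
\[
f_d^{\textsc{tr}} = \mathscr{U}_d f(\mathscr{H}_d - h_{d+1,d}\mathpzc{t}e_d^T)e_1\|b\|,
\]
which, recalling $f_d^{\textsc{fom}} = \mathscr{U}_d f(\mathscr{H}_d) e_1 \|b\|$ and setting $v = -h_{d+1,d}\mathpzc{t}$, gives
\[
f_d^{\textsc{tr}} - f_d^{\textsc{fom}} = \mathscr{U}_d\bigl(f(\mathscr{H}_d + v e_d^T)e_1 - f(\mathscr{H}_d)e_1\bigr)\|b\|.
\]
The bracketed expression has exactly the form addressed by Theorem~\ref{th:errorf}, applied with $M = \mathscr{H}_d$ and $w = v$. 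The theorem's hypothesis of diagonalizability of $\mathscr{H}_d + v e_d^T$ is the implicit standing assumption in the definition~\eqref{eq:g} of $g_v$, and $f$ is assumed to be defined on the spectra of $\mathscr{H}_d$ and of $\mathscr{H}_d + v e_d^T$ (the latter is just a similarity transform of $H_d$, so its spectrum coincides with that of $H_d$, which is where $f$ must be defined for the truncated approximation to make sense in the first place). Applying the theorem yields
\[
f(\mathscr{H}_d + v e_d^T)e_1 - f(\mathscr{H}_d)e_1 = g_v(\mathscr{H}_d)v,
\]
and substituting this into the previous display delivers the claimed identity.

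The only non-trivial ingredient is the representation~\eqref{eq:trfom_orth_basis} itself, which is derived in the paragraphs above by using $U_d = \mathscr{U}_d\mathscr{T}_d$ and the relation~\eqref{eq:sFOM_orth_basis2}; since that derivation is spelled out explicitly for the sketched case (and proceeds identically, only without the $\mathscr{T}_d r$ contribution, in the truncated case), there is no real obstacle to overcome. Thus the corollary is essentially an immediate specialization of Theorem~\ref{th:errorf}, and the proof can be written in a single sentence analogous to the proof of Corollary~\ref{cor:relation_sketched_full}.
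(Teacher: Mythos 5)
Your proposal is correct and follows exactly the paper's own route: the paper's proof is the one-line application of Theorem~\ref{th:errorf} with $M = \mathscr{H}_d$ and $w = v$, relying on the representation~\eqref{eq:trfom_orth_basis} established in the preceding text, just as you do. Your additional remarks on verifying the hypotheses (diagonalizability and the spectrum of $\mathscr{H}_d + v e_d^T$) are consistent with the paper's standing assumptions and do not change the argument.
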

\begin{proof}
The result follows by applying Theorem~\ref{th:errorf} with $M = \mathscr{H}_d$ and $v = - h_{d+1,d}\mathpzc{t}$.
\end{proof}

\vskip 0.1in
There are several possibilities for comparing the sketched and truncated Arnoldi approximations to each other. We present only the one based on the banded Hessenberg matrix $H_d$ and the non-orthogonal basis $U_d$ in the following. It would also be possible, using~\eqref{eq:sFOM_orth_basis3} and~\eqref{eq:trfom_orth_basis}, to state a result in terms of the matrices $\mathscr{U}_d$ and $\mathscr{H}_d$ from the full Arnoldi process, but this is less appropriate here as neither of the two methods uses these matrices in practice.
\begin{corollary}\label{cor:relation_sketched_truncated}
The truncated and sketched Arnoldi approximations $f_d^{\textsc{tr}}$ and $f_d^{\textsc{sk}}$ from~\eqref{eq:trfom} and~\eqref{eq:sfom_Hrh} fulfill
\[
f_d^{\textsc{tr}}-f_d^{\textsc{sk}} = U_d g_r(H_d)r\|b\|,
\]
where $g_r$ is defined in~\eqref{eq:g}, with respect to the eigendecomposition of $H_d+re_d^T$.
\end{corollary}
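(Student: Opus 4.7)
The plan is to apply Theorem~\ref{th:errorf} directly, exactly as in the proofs of Corollaries~\ref{cor:relation_sketched_full} and~\ref{cor:relation_truncated_full}. The key observation is that, unlike when comparing against the full FOM approximation, here both $f_d^{\textsc{tr}}$ and $f_d^{\textsc{sk}}$ already share the same non-orthogonal basis $U_d$ and the same scaling $\|b\|$: from~\eqref{eq:trfom} and the equivalent representation~\eqref{eq:sfom_Hrh} of the sketched approximation, the latter obtained via the sketched Arnoldi relation~\eqref{eq:sketched_arnoldi}, we have
\[
f_d^{\textsc{tr}} - f_d^{\textsc{sk}} = U_d\bigl[f(H_d)\,e_1 - f(H_d + r e_d^T)\,e_1\bigr]\|b\|.
\]
So no further change of basis is needed before applying the rank-one-update machinery; this is what makes the comparison of the sketched and truncated approximations particularly clean when stated in terms of $H_d$ and $U_d$, as opposed to going through $\mathscr{H}_d$ and $\mathscr{U}_d$ as in~\eqref{eq:sFOM_orth_basis3} and~\eqref{eq:trfom_orth_basis}.

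The next step is simply to invoke Theorem~\ref{th:errorf} with $M := H_d$ and $w := r$. By construction the function $g_r$ is then built from the eigendecomposition of $M + w e_d^T = H_d + r e_d^T$, precisely as required in the statement of the corollary. The theorem yields $f(H_d + r e_d^T)\,e_1 - f(H_d)\,e_1 = g_r(H_d)\, r$; substituting into the displayed identity above and rearranging (matching the sign convention already adopted in Corollaries~\ref{cor:relation_sketched_full} and~\ref{cor:relation_truncated_full}) gives the claimed formula.

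I do not anticipate a real obstacle here, since all the heavy lifting has been carried out upstream: the derivation of the sketched Arnoldi relation~\eqref{eq:sketched_arnoldi}, the transformation of the pseudoinverse definition~\eqref{eq:sfom} into the rank-one-perturbed Hessenberg form~\eqref{eq:sfom_Hrh}, and the proof of Theorem~\ref{th:errorf} itself. The only point worth flagging explicitly in the writeup is a hypothesis check: Theorem~\ref{th:errorf} requires that $H_d + r e_d^T$ be diagonalizable and that $f$ be defined on a region containing the eigenvalues of both $H_d$ and $H_d + r e_d^T$. The first is generic for sketching-generated $r$, while the second must be stated with some care because, as already discussed in section~\ref{subsec:sketched_truncated_matfun}, the spurious ``sketched Ritz values'' appearing as eigenvalues of $H_d + r e_d^T$ may lie far outside the spectral region of $A$, so the domain of $f$ has to be taken large enough to enclose them.
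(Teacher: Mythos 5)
Your proof is correct and takes essentially the same route as the paper's, whose entire proof is likewise a one-line application of Theorem~\ref{th:errorf} with $M = H_d$ and $w = r$, made possible because \eqref{eq:trfom} and \eqref{eq:sfom_Hrh} already share the basis $U_d$ and the scaling $\|b\|$. The only point worth flagging is the sign: the theorem literally gives $f_d^{\textsc{sk}} - f_d^{\textsc{tr}} = U_d\, g_r(H_d)\, r\, \|b\|$ (perturbed minus unperturbed, consistent with the convention of Corollaries~\ref{cor:relation_sketched_full} and~\ref{cor:relation_truncated_full}), so the minus sign that your ``rearranging'' remark glosses over is a discrepancy sitting in the corollary's statement itself rather than a gap in your argument --- the paper's own one-line proof silently passes over the same issue.
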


\begin{proof}
The result follows from Theorem~\ref{th:errorf} applied with $M = H_d$ and $v = r$.
\end{proof}

When bounding the distance between the different approximations, the results of Theorem~\ref{thm:curtiss} and Proposition~\ref{prop:divdiff_factorial} can be used to handle the matrix functions occurring on the right-hand sides of the preceding corollaries by bounding $\|g_v(\mathscr{H}_d)\|$. Such bounds on the norm of the matrix function are only useful if we also have appropriate bounds available for the vectors {\color{black}$v$}. In the following, we particularly discuss the vector {\color{black}$v = \widehat{r}$} from Corollary~\ref{cor:relation_sketched_full} which relates the sketched to the full Arnoldi approximation.

Consider the equation~\eqref{eq:r_hat2}. Using the fact that $t = Q_{d+1}^*Su_{d+1}$ together with $T_d^{-1}Q_d^* = (SU_d)^\dagger$, we find the representation
\begin{align*}
\widehat{r} &= (\mathscr{T}_d(SU_d)^\dagger Su_{d+1}-\mathscr{U}_d^Tu_{d+1})(h_{d+1,d}/\widehat{\tau}_d) \\
            &= \mathscr{U}_d^T(U_d(SU_d)^\dagger Su_{d+1}-u_{d+1})(h_{d+1,d}/\widehat{\tau}_d),
\end{align*}
where we have used $\mathscr{T}_d = \mathscr{U}_d^TU_d$ for the second equality. The vector $(SU_d)^\dagger Su_{d+1}$ appearing above is the solution vector $x_S$ of the sketched least squares problem
\[
x_S = \argmin_{x \in \mathbb{R}^d} \| S(U_dx-u_{d+1})\|.
\]
From established theory covering sketching methods for least squares problems~\cite{sarlos2006improved}, it is known that the solution $x_S$ of the sketched problem cannot have a significantly higher residual than the solution of the original problem if the embedding quality is high enough. Precisely, denoting by $x_\star = U_d^\dagger u_{d+1}$ the solution of the non-sketched least squares problem, we have the following inequality relating the residuals of $x_S$ and $x_\star$,
\[
\|U_dx_S-u_{d+1}\| \leq \sqrt{\frac{1+\varepsilon}{1-\varepsilon}}\|U_dx_\star-u_{d+1}\|;
\]
see~Equation~(2.3) in~\cite{NakatsukasaTropp2021}, which is adapted from results in~\cite{sarlos2006improved,woodruff2014sketching}.\footnote{We remark that the result is stated slightly differently in~\cite{NakatsukasaTropp2021}: Precisely, the scalar factor on the right hand side of the inequality does not involve a square root, because it is based on a sketching property that is stated in terms of $\|v\|$, and not in terms of $\|v\|^2$ as our inequality~\eqref{eq:sketch}.}

Further, due to the fact that $U_d = \mathscr{U}_d\mathscr{T}_d$ is a QR decomposition, we have that $\|U_dx_\star - u_{d+1}\| = |\widehat{\tau}_{d+1}|$.
Since $\|U_dx_\star - u_{d+1}\| =\|(I-\mathscr{U}_d\mathscr{U}_d^T) u_{d+1}\|$, 
the quantity $|\widehat{\tau}_{d+1}|$ measures the contribution of the new
vector $u_{d+1}$ to the expansion of the Krylov subspace.

Putting all this together and using the fact that $\mathscr{U}_d$ has orthonormal columns, we find
\begin{equation}\label{eq:norm_r_hat}
\|\widehat{r}\| \leq h_{d+1,d}\left|\frac{\widehat{\tau}_{d+1}}{\widehat{\tau}_d}\right|\sqrt{\frac{1+\varepsilon}{1-\varepsilon}}.
\end{equation}

As long as  the space keeps growing, we expect that 
$\left|\frac{\widehat{\tau}_{d+1}}{\widehat{\tau}_d}\right| \approx 1$,
so that an overall result of the following form can be obtained from Proposition~\ref{prop:divdiff_factorial}.

{\color{black}%
\begin{corollary}\label{cor:error_Hd}
Let $\mathscr{H}_d \in \C^{d \times d}$ be the upper Hessenberg matrix from the (full) Arnoldi relation~\eqref{eq:arnoldi_relation} and let $D \subseteq \C$ be a convex set that contains $F(\mathscr{H}_d)$ and the eigenvalues of $\mathscr{H}_d + \widehat{r}e_d^\top$. Then
\begin{equation}\label{eq:bound_fsk_D}
\|f_d^{\textsc{sk}}-f_d^{\textsc{fom}} \|\lesssim (1+\sqrt{2})\sqrt{\frac{1+\varepsilon}{1-\varepsilon}}\|b\| \frac{\prod_{j=1}^{d-1} \mathscr{H}_{d+1,d}}{d!} \max_{z \in D} |f^{(d)}(z)| .
\end{equation}    
\end{corollary}}

The bound {\color{black}in Corollary~\ref{cor:error_Hd}} depends on the set $D$, and in general, $D$ might be much larger and have less favorable properties than $F(\mathscr{H}_d)$ (which is guaranteed to lie within $F(A)$). For example, a straightforward choice of $D$ would be the convex hull of $F(\mathscr{H}_d) \cup F(\mathscr{H}_d + \widehat{r}e_d^\top)$. In that case, the bound~\eqref{eq:bound_fsk_D} qualitatively reproduces available results from the literature, as, e.g., Theorem~4.3 in~\cite{CortinovisKressnerNakatsukasa2022} and Corollary~2.4 in~\cite{GuettelSchweitzer2022}. {\color{black}These results rely on the Crouzeix-Palencia theorem applied directly to $f_d^{\textsc{sk}}-f_d^{\textsc{fom}}$ and thus necessarily need to work with a region $D$ in the complex plane that includes the field of values of the sketched and projected matrix. In contrast, we apply the Crouzeix-Palencia theorem only to $h$ from~\eqref{eq:h}, which is evaluated at $\mathscr{H}_d$. The influence of the rank-one modified matrix $\mathscr{H}_d + \widehat{r}e_d^\top$ thus only occurs through its eigenvalues, thus giving us more freedom in the choice of $D$. In particular, it allows us to use the results presented in section~\ref{sec:error_vec} below to argue why eigenvalues far outside $F(\mathscr{H}_d)$ do not negatively influence the behavior of the method. This is in line with  experimental evidence reported in~\cite{CortinovisKressnerNakatsukasa2022, GuettelSchweitzer2022} as well as in our Example~\ref{ex:whitening} below.}


\section{On the error vector and the role of outlying spurious Ritz values}\label{sec:error_vec}
{In the previous section we wrote the error between different methods in terms of the ``error'' vector
(see Theorem~\ref{th:errorf})
$$
g_v(M)v  = \sum_{i=1}^d \alpha_i \beta_i f[M,\lambda_i]v,
$$
where $\alpha_i = e_d^T X e_i$, $\beta_i = e_i^T X^{-1} e_1$ with $X$ from the eigenvalue
decomposition~\eqref{eq:eigenvalue_decomposition}.}

The magnitude of the $\alpha_i, \beta_i$ and of the divided differences needs further analysis. In general, and for large $\|v\|$, we do not expect the eigenvalues of $\mathscr{H}_d$ and $\mathscr{H}_d + v e_d^T$ (or $H_d$ and $H_d + v e_d^T$) to be close to each other. We now discuss this again in the particular situation of Corollary~\ref{cor:relation_sketched_full}, i.e., for $v = \widehat{r}$, noting that the derivation works similarly in the other cases. 

For any nonzero $d\times d$ lower Hessenberg matrix $N$, we define
\begin{equation}\label{eqn:gammai}
\gamma_{\lambda,N} := 
\min_{i=1, \ldots, d-1}\frac{|N_{i,i}-\lambda|}{|N_{i,i+1}|}.
\end{equation}
This quantity allows us to analyze the contribution of the eigenvalues of $\mathscr{H}_d+\widehat r e_d^T$ in the error vector.
We recall here that the eigenvalues of $\mathscr{H}_d+\widehat r e_d^T$ are the same as those of
${H}_d+ r e_d^T$.
As soon as ill-conditioning arises in the generation of the basis $U_d$, the entries of ${r}$ start to grow in magnitude.
In this case, some of the eigenvalues $\lambda_i$ may be far from the field
of values of $\mathscr{H}_d$, as $F(\mathscr{H}_d+ \widehat{r} e_d^T)$ is not included in $F(\mathscr{H}_d)$, whereas
others may be either close or within $F(\mathscr{H}_d)$. 
In the following, we show that whenever $\|r\|$ grows,
the quantity $\beta_i$ is positively affected by becoming correspondingly small.
 
We next assume that $\gamma_{\lambda_i,\mathscr{H}_d}>1$. In particular, this is the case if the eigenvalue $\lambda_i$ is far from  $F(\mathscr{H}_d)$, relative to $\|\mathscr{H}_d\|$.
It appears that a significantly distant eigenvalue
does not interfere with the actual error, so that the corresponding term
in the sum of $g_v$ becomes negligible, in spite of possibly large values of $f[\mathscr{H}_d,\lambda]$. This argument is justified next.  

\begin{proposition}\label{rem:growth_yk}
{Let $N$ be} a $d\times d$ {\it lower} Hessenberg matrix with 
$N_{k,k+1}\ne 0$, for all $k$, and  let $\widehat N=N+ e_d v^*$ for some $v$.
Then for each eigenpair $(\lambda, y)$ of $\widehat N$ with
$\|y\|=1$ it holds, denoting the components $y = (\eta_1,\dots, \eta_d)^T$, that
$$
| \eta_{k+1}| \le  
 \frac{|N_{k,k} - \lambda|}{|N_{k,k+1}|} |\eta_k|  + 
\frac{\|N_{k,1:k-1}\|}{|N_{k,k+1}|}, \quad k = 1,\dots,d-1,
$$
and
\[
|(N_{d,d}+v_d) - \lambda| |\eta_d|  \le \|v_{1:d-1}+N_{d,1:d-1}\|.
\]
\end{proposition}

\begin{proof}
{Let $(\lambda,y)$ be an eigenpair of $\widehat N$, that is} $\widehat N y = \lambda y$. For each row $k$ with $2\le k<d$
it holds 
$$
N_{k,k+1} \eta_{k+1} =-(N_{k,k} - \lambda) \eta_k  + 
N_{k,1:k-1} \eta_{1:k-1}.
$$
We assume that not all indexed components of $y$ are zero, otherwise the bound follows trivially.
Hence,
$$
| \eta_{k+1}| \le  
\frac{|N_{k,k} - \lambda|}{|N_{k,k+1}|} |\eta_k|  + 
\frac{\|N_{k,1:k-1}\|}{|N_{k,k+1}|}\, \|\eta_{1:k-1}\|
\le \frac{|N_{k,k} - \lambda|}{|N_{k,k+1}|} |\eta_k|  + 
\frac{\|N_{k,1:k-1}\|}{|N_{k,k+1}|}.
$$
Equating the last row yields the second bound.
\end{proof}

{A few comments are in order.}
Let 
$\phi_{k}(\lambda)=\prod_{j=1}^{k} 
 \frac{(N_{j,j} - \lambda)}{N_{j,j+1}}
$
be the polynomial  of degree $k$ associated with the  diagonal and superdiagonal of $N$. 

If $\frac{|N_{j,j} - \lambda|}{|N_{j,j+1}|}>1$ for each $j$, then the proposition above implies  that
$$
|\eta_{k+1}| \approx |\phi_k(\lambda)|, \qquad k<d.
$$
This means that the components tend to grow as a power of $k$, that is
\begin{equation}\label{eq:yk_power}
|\eta_k| \approx O(\gamma_{\lambda,N}^{k-1}).
\end{equation}
Due to the  unit norm of $y$, this means that if $\lambda$ is sufficiently far from 
the diagonal elements of $N$,
the first components of $y$ are tiny,
and the subsequent components grow as a growing power of such distance, with the last component of $y$ being $O(1)$ (see Example~\ref{ex:toepl}).
Finally, we observe that the inequality $\frac{|N_{j,j} - \lambda|}{|N_{j,j+1}|}>1$ for each $j$ may also hold if $|\lambda|$ is much larger than $|N_{j,j}|$ and $|N_{j,j+1}|$; in this case, the increasing property of the components is an intrinsic property of the eigenpair, and it may occur also for $\lambda$ not far from the field of values of $N$.

The result {of Proposition}~\ref{rem:growth_yk} is quite general.
To apply it to our setting for the error
formula, we take $N=\mathscr{H}_d^*$. 
Except for normalization, $y=y_i$ is the left
eigenvector of $\mathscr{H}_d$ corresponding to the eigenvalue $\lambda_i$ for some $i$. 
In particular, $y_i^*e_1=e_i^TX^{-1}e_1$. If $\lambda_i$ 
is far away from the diagonal elements of $\mathscr{H}_d^*$,
the
contribution of $\beta_i=e_1^T X^{-1}e_i=(\bar y_i)_1$ becomes extremely limited, that is, 
$$
|\beta_i|\approx 
O( \gamma_{\lambda,N}^{1-d}).
$$ 
This property is crucial because it ensures the following counterintuitive result: although the rank-one modification can dramatically affect the spectrum, the wildly varying eigenvalues are not going to contribute to the error.

The following example illustrates the eigenvector growth property. 
\vskip 0.05in

\begin{example}\label{ex:toepl1}
We consider the {Toeplitz} data in Example~\ref{ex:toepl} for fixed
$d=100$, with $\mathscr{H}_d^*$ playing the role of $N$ and $(\mathscr{H}_d+ \widehat{r} e_d^T)^*$ that of $N+e_d v^T$.
The left plot in Figure~\ref{fig:toeplitz1} shows the spectral quantities
associated with the two matrices, including the boundary of $F(\mathscr{H}_d)$ for later reference. The plot
clearly reports the presence of many eigenvalues of $\mathscr{H}_d+ \widehat{r} e_d^T$ outside the
field of values of $\mathscr{H}_d$, and also falling into the right half-plane.
The right plot shows the components absolute value of the two left eigenvectors  of
$\mathscr{H}_d+ \widehat{r} e_d^T$ corresponding to the eigenvalues pointed to in the left plot.
The two additional lines report the values of $|\phi_k(\lambda)|$ for the
two eigenvalues; the vectors containing these quantities
have been scaled so that the last component has unit value. The eigenvector components behavior is as expected, and
it is accurately predicted by the quantity $|\phi_k(\lambda)|$.
We notice that the first several components of each eigenvector 
have a magnitude at the level of machine epsilon.
In exact arithmetic their values would follow the steep slope of the
other components. 
\end{example}

\begin{figure}[tbhp]
 \centering
\includegraphics[height=.35\textwidth,width=0.48\textwidth]{./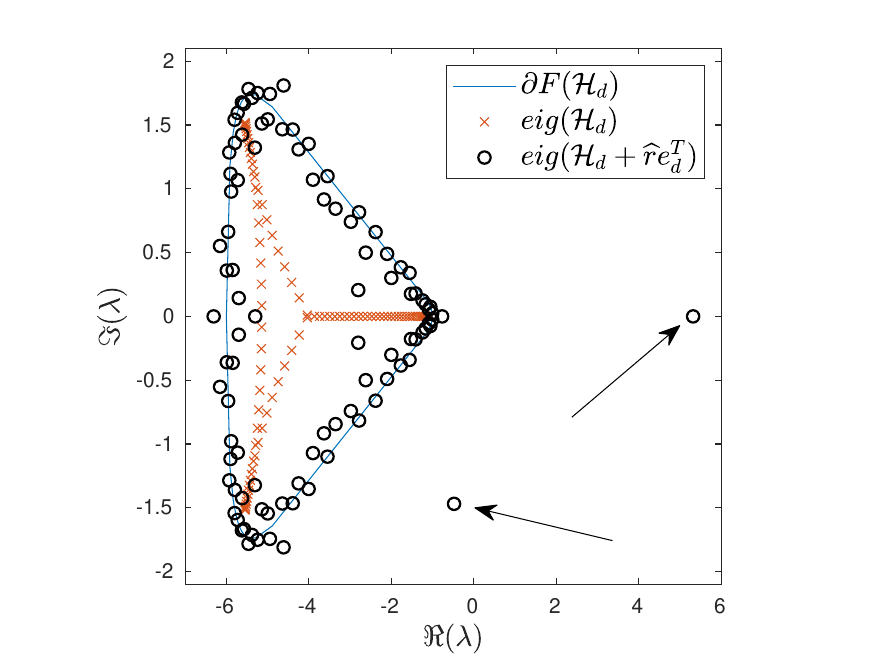}
\includegraphics[height=.35\textwidth,width=0.48\textwidth]{./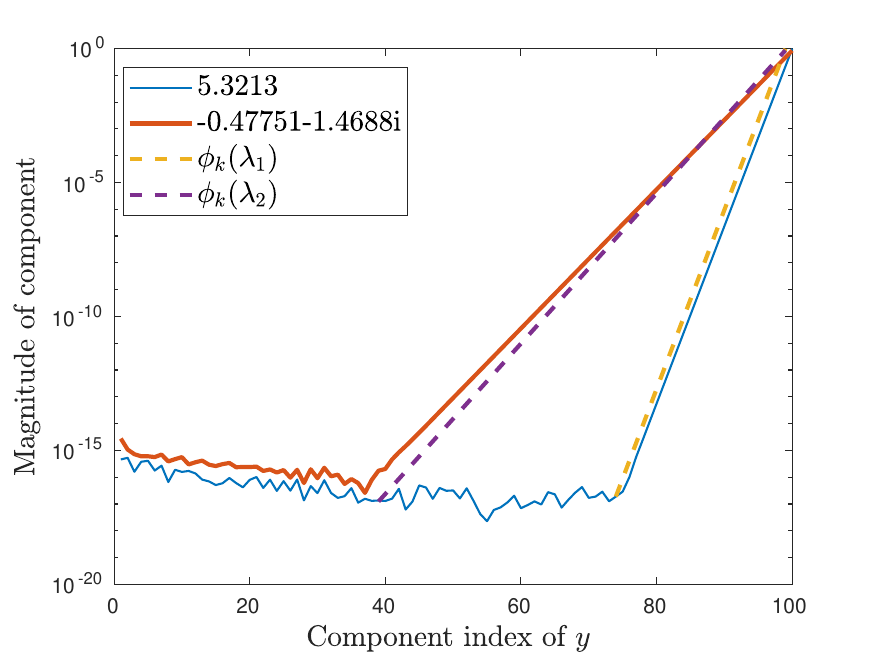}
  \caption{Example~\ref{ex:toepl1}. \emph{Left:} field of values and eigenvalues of $\mathscr{H}_d$, and eigenvalues
of $\mathscr{H}_d+\widehat{r}e_d ^T$. The arrows point to the eigenvalues used in the right plot.
\emph{Right:} absolute values of the eigenvector components for the selected
eigenvalues, together with the normalized vector of polynomial values $\phi_k$.
\label{fig:toeplitz1}}
\end{figure}

It is worth mentioning that whenever $\beta_i$ is small, 
the term $\beta_i f[N,\lambda_i]$ remains small. 
For instance, for the exponential,
$$
\beta_i \exp[N,\lambda_i] \approx 
\frac{|N_k|^{d-1}}{|N_{k,k+1}-\lambda_i|^{d-1}}
(N-\lambda_i I)^{-1}(\exp(N)-\exp(\lambda_i) I)
$$
for some $k<d$.
Let us assume, as in the previous example, that 
$\gamma_{\lambda_i,N}\approx |\lambda_i|>1$.
Then
$$
\frac{ |\exp(\lambda_i)|}{
(\gamma_{\lambda_i,N})^{d-1}}
\approx \frac{ |\exp(\lambda_i)|}{|\lambda_i|^{d-1}},
$$
which goes to zero as $d$ goes to infinity.

For our purposes, the analysis above shows that the likely erratic
spectral behavior in the sketched approach will not negatively affect the final approximation. Loosely speaking, the matrix structure takes care of purging the undesired eigenvalues. In this way, the ``bad'' portion of the generated space is implicitly deflated.

Motivated by the analysis above, we return to the relation $\alpha_i \beta_i = \frac{\prodM_d}{\omega'(\lambda_i)}$.
 Let $\cal J$ be the set of indexes $i$ such that $\beta_i$ is not tiny. Hence,
the sum over all eigenvalues can be limited to those in $\cal J$, that is
$$
\sum_{i=1}^d \alpha_i \beta_i f[M,\lambda_i] v \approx 
\sum_{i\in {\cal J}} \alpha_i \beta_i f[M,\lambda_i] v.
$$
A similar reduction can be written for (\ref{eqn:ferrnew2}), in terms of
the $d$th order divided differences $f[M, \lambda_1, \ldots, \lambda_d]$, namely
$f[M, \lambda_{i_1}, \ldots, \lambda_{i_{|{\cal J}|}}]$.
Hence, the set ${\cal J}$ identifies the ``effective'' eigenpairs on which the approximation error actually lives.

In the context of the sketched Arnoldi method, this shows that bounds like~\eqref{eq:bound_fsk_D} would not be expected to be descriptive of the actual behavior of the method if they were to rely on approximation properties on the whole field of values $F(\mathscr{H}_d+\widehat{r}e_d^T)$. Rather, the behavior of the method is dictated by a region that may be only slightly larger than the spectral region of $\mathscr{H}_d$. Precisely, in light of~\eqref{eq:bound_gMw} and the analysis above, to obtain a convergence estimate for the sketched Arnoldi method, we can take $D$ as any convex set containing $F(\mathscr{H}_d) \cup \{\lambda_i : i \in \cal J \}$. We therefore introduce a more descriptive region of the complex plane: Let $\delta \ge 0$ be such that all eigenvalues $\lambda_i, i \in {\cal J}$ satisfy
${\rm dist}(\lambda, F(\mathscr{H}_d)) < \delta$, and let
$$
F(\mathscr{H}_d,\delta):=\{ z\in {\mathbb C} : {\rm dist}(z, F(\mathscr{H}_d)) \leq \delta \};
$$ 
we will call this the {\it effective} field of values of $\mathscr{H}_d$.
Clearly, $F(\mathscr{H}_d)\subseteq F(\mathscr{H}_d,\delta)$.

For $\lambda_i$ with $i\not\in {\cal J}$, we let 
$\zeta_i= {\rm arg} \min_{z\in \partial F(\mathscr{H}_d)} |z- \lambda_i|$.
Then we define the following set of values
$$
\widehat \lambda_i = 
\left \{\begin{array}{cc}\lambda_i & i \in {\cal J}, \\
                     \zeta_i & i \not\in {\cal J}. 
        \end{array}\right .
$$
Then, according to the previous discussion,
$$
 \sum_{i=1}^d \frac{1}{\omega^\prime(\lambda_i)} f[\mathscr{H}_d,\lambda_i] v
\approx
 \sum_{i=1}^d \frac{1}{\omega^\prime(\widehat \lambda_i)} f[\mathscr{H}_d,\widehat \lambda_i] v.
$$
The definition of the $\widehat\lambda_i$'s thus allows us to move (rather than remove)
the inactive eigenvalues, while maintaining the number of terms in the divided difference. 
With these definitions, and using
$h(z):= f[z,\widehat \lambda_1,\dots,\widehat \lambda_d]$, (\ref{eqn:bound_h}), applied for $M=\mathscr{H}_d$, becomes
\begin{equation*}
\|h(\mathscr{H}_d)\| \leq (1+\sqrt{2}) \max_{z \in F(\mathscr{H}_d)} |h(z)| \le (1+\sqrt{2})\frac{\max_{\zeta \in F(\mathscr{H}_d,\delta)} |f^{(d)}(\zeta)|}{d!}.
\end{equation*}
The bound employs the region of the complex plane where the sketched
method is active, and this region can be
much smaller than the field of values of $\mathscr{H}_d+ \widehat{r}e_d^T$. By means of these new tools, we can obtain an approximate bound of the form
\begin{equation}\label{eq:bound_family}
\|f(\mathscr{H}_d + \widehat{r} e_d^T)e_1 - f(\mathscr{H}_d)e_1\| \lesssim \prodM_d 
(1+\sqrt{2})\frac{\max_{\zeta \in F(\mathscr{H}_d, \delta)} |f^{(d)}(\zeta)|}{d!}\, \|\widehat{r}\|,
\end{equation}
which is a family of analogues of~\eqref{eq:bound_gMw} depending on the effective field of values, or more precisely on $\delta$.

To summarize, our analysis shows that convergence of the sketched method will always depend on a region that is not much larger than that of $\mathscr{H}_d$ (and thus that of $A$): If sketched Ritz values far outside the spectral region of $\mathscr{H}_d$ occur, they simply do not play a role. Only those that are slightly outside need to be accounted for. Thus, one can typically expect at most a slight delay of convergence due to sketching. The role of $\gamma_{\lambda,N}$ is crucial, and its relation with the field of values of $N$ deserves further work, which will be postponed to a later investigation: while $\gamma_{\lambda,N} > 1$ whenever $\lambda$ is far away from $F(\mathscr{H}_d)$, we also observed that this condition might sometimes hold for eigenvalues close to or even within the field of values, indicating that there might be even more sketched Ritz values that do not significantly influence the error.

A by-product of this analysis is the following: If none of the sketched Ritz values lie outside of $F(\mathscr{H}_d)$, then we can take $\delta = 0$ above, and the convergence behavior of the sketched Arnoldi method simply depends on properties of $f$ on $F(\mathscr{H}_d,0) = F(\mathscr{H}_d)$, irrespective of how large $F(\widehat{H}_d + \widehat{t}e_d^T)$ is. This is in contrast to, e.g., the results of~\cite{CortinovisKressnerNakatsukasa2022} which rely on the Crouzeix-Palencia theorem~\cite{Crouzeix.Palencia.17} and therefore always depend on the whole field of values of $\widehat{H}_d + \widehat{t}e_d^T$, even if the sketched Ritz values are well-behaved.

\section{Robust computation of sketched approximations}\label{Robust computation of sketched approximations}
In this section we want to brief\/ly discuss two further issues related to the experimentally observed numerical (in)stability of sketched Krylov methods. On the one hand, we provide arguments that explain why the WS-Arnoldi version~\eqref{eq:sfom_whitened_hessenberg} of the sFOM approximation typically appears to be very robust in floating point arithmetic despite the inversion of the possibly very ill-conditioned matrix $T_d$ that is involved. On the other hand, we explain which potential problems can occur with the non-whitened variant~\eqref{eq:sfom_Hrh} based on a rank-one modification of $H_d$.

{
By starting from expression~\eqref{eq:sfom_whitened_hessenberg} for the sketched-whitened approximation, that is
$f_d^{\textsc{sk}} = U_d T_d^{-1} \widehat y$ with
$\widehat y= f(\widehat H_d + \widehat t e_d^T) e_1 \|Sb\|$,
we want to analyze the stability properties of $ T_d^{-1}\widehat y$ and show 
that { the solution accuracy} is not significantly influenced
by the possible ill-conditioning of $T_d$. { To this end, we {recall the} 
well-known fact that in general, triangular systems are often solved to much higher accuracy in floating point arithmetic than their condition number would suggest (although it is not possible to prove a precise, general results in this direction as---rather academic---counter examples exist). This observation goes back to Wilkinson~\cite{wilkinson1961error} and has since then been investigated many times; see, e.g., Chapter~8 in~\cite{higham2002accuracy} for an overview. In the following, we provide a result that is specifically tailored to our situation.}

 We first recall the following standard forward error bound for triangular systems; see, e.g., Equation~(2.6) in~\cite{HighamTriang}. 
Let $z=T_d^{-1} \widehat y$ and $\widetilde z$ be the exact and computed solutions, respectively.
Then
\begin{equation}\label{eqn:forward_error}
\frac{\|z-\widetilde z\|_\infty}{\|z\|_\infty} \le
(d+1)\, {\tt u}\,   {\rm cond}(T_d,z) + O(u^2),
\end{equation}
where 
$$
 {\rm cond}(T_d,z) = 
\frac{\|\, |T_d^{-1}|\, |T_d|\, |z|\, \|_\infty}{\|z\|_\infty}, 
$$
 and $|\,\cdot\,|$ is the component-wise absolute value.
Here {\tt u} is the unit roundoff.
\vskip 0.1in

\begin{proposition}~\label{Prop:triang}
Let ${\rm cond}_\infty (T)=\|T\|_\infty\|T^{-1}\|_\infty$ denote the infinity norm condition number.
Assume $T_d$ can be partitioned in such a way that 
$$
T_d=\begin{bmatrix} T_1 & T_2\\ 0 & \tau T_3\end{bmatrix},\quad
{\rm cond}_\infty (T_1)=c_1, \quad and \quad 
{\rm cond}_\infty (T_3)=c_3,
$$
and $\tau>0$. Both $T_1, T_3$ are square and nonsingular.
Then (\ref{eqn:forward_error}) holds
with 
$$
{\rm cond}(T_d,z) \le
\frac{\max\left \{ c_3\|z_2\|_{\infty}, \,
c_1 \|z_1\|_\infty + \||T_1^{-1}T_2|\,\|_\infty( 1+c_3)\|z_2\|_\infty 
\right \}}{\|z\|_\infty},
$$
where $z=[z_1;z_2]$ is partitioned accordingly with $T_d$.
\end{proposition}

\begin{proof}
Let $\widetilde T_3 = \tau T_3$. We spell out the definition of 
 {\rm cond}$(T_d,z)$ in terms of the partitioning of $T_d$.
We first write 
$T_d^{-1}=[T_1^{-1}, - T_1^{-1}T_2 \widetilde T_3^{-1}; 0, \widetilde T_3^{-1}]$.
We then have
\begin{eqnarray*}
\|\, |T_d^{-1}|\, |T_d|\,|z|\|_\infty &=&
\|\, |T_d^{-1}|\, 
\begin{bmatrix}
|T_1|\, |z_1| + |T_2|\, |z_2|\\
|\widetilde T_3|\,|z_2| 
\end{bmatrix}\,\|_\infty \\
&=&
\|\,   \begin{bmatrix}
|T_1^{-1}|\, (|T_1|\, |z_1| + |T_2|\, |z_2|) 
 + |T_1^{-1}T_2 \widetilde T_3^{-1}|\, |\widetilde T_3|\,|z_2| \\ 
|\widetilde T_3^{-1}||\widetilde T_3|\,|z_2| \\
\end{bmatrix}\|_\infty \\
&\le &
\|\begin{bmatrix}
c_1 \|z_1\|_\infty + \||T_1^{-1}T_2|\,\|_\infty( 1+c_3)\|z_2\|_\infty 
  \\ 
c_3\|z_2\|_{\infty} \\
\end{bmatrix} \|_\infty
\\
&= &
\max\{ c_3\|z_2\|_{\infty}, 
c_1 \|z_1\|_\infty + \||T_1^{-1}|\,  |T_2|\,\|_\infty( 1+c_3)\|z_2\|_\infty \}
\end{eqnarray*}
\end{proof}

Proposition~\ref{Prop:triang} shows that, as long as the ill-conditioning of $T_d$ is encoded in its right-bottom corner {\color{black} as the multiplicative term $\tau$}, the entry-wise condition number {\rm cond}$(T_d,z)$, and thus the forward error in~\eqref{eqn:forward_error}, is bounded solely by the condition numbers of well-behaved submatrices and exact arithmetic quantities. Moreover, notice that $T_3$ in Proposition~\ref{Prop:triang} does not need to be a matrix but it can also be just a scalar. In this case it holds $c_3=1$ {\color{black} and $\tau=\tau_d$}.

The assumptions in Proposition~\ref{Prop:triang} are easily met in our sketched-and-truncated framework. Indeed, the non-orthogonal basis $U_d$ \emph{gradually} becomes more ill-conditioned as the iteration progresses and (almost) linear dependence occurs mostly between ``later'' basis vectors. To understand this, consider the extreme case of no orthogonalization at all: Then, the sequence of Krylov basis vectors converges towards the dominant eigenvector of $A$, so that later basis vectors will all point in almost the same direction, while staying well linearly independent with respect to the first basis vectors. A similar phenomenon is preserved when employing a partial orthogonalization with small truncation parameter. 

As the sketching matrix $S$ distorts inner products in a controlled manner due to~\eqref{eq:sketch_innerproduct}, this property of $U_d$ is inherited by the sketched basis $SU_d$ for which we perform the QR decomposition. This observation implies that the severe ill-conditioning of $T_d$ is mostly encoded in a bottom right subblock, the same exact scenario depicted in Proposition~\ref{Prop:triang}. 
}

\begin{remark}\label{rem:balancing}
Considering~\eqref{eq:sfom_Hrh}, note that the matrix $H_d+re_d^T$ is often not well-balanced, in the sense that the norm of its last column is typically much larger than that of all other columns. It is known since the 1970s that balancing can have an influence on the approximation accuracy for matrix functions; e.g., when using techniques such as Pad\'e approximation~\cite{fair1970pade,ward1977numerical}, although the effect is difficult to rigorously quantify; cf.,~e.g., sections~3 and~6 in~\cite{almohy2011computing}. 

We turn to the specific case of scaling-and-squaring methods for the matrix exponential. A first step in such methods is the computation of a parameter $\gamma$ such that $\|2^{-\gamma}(H_d+re_d^T)\| < 1$. When $H_d+re_d^T$ is poorly balanced, the value of $\gamma$ will be almost solely determined by its last column, and the entries of all other columns in $2^{-\gamma}(H_d+re_d^T)$ might become tiny. This and related effects are known under the term \emph{overscaling} in the matrix function literature; see, e.g., section~1 in~\cite{almohyhigham2010} or\cite{dieci2000pade}. Clearly, this can result in a noticeable decrease in the accuracy of the method in the presence of round-off error. In contrast, the matrix $\widehat{H}_d + \widehat{t}e_d^T$ from~\eqref{eq:sfom_Hrh} will typically be better balanced due to the similarity transformation with $T_d$ and thus less prone to overscaling effects.
\end{remark}

{\color{black} We next report the results of an illustrative example. For the sake of clarity we report an algorithmic description of the whitened-sketched Arnoldi method that we use as Algorithm~\ref{alg:whitening_krylov} in Appendix~\ref{appendix:algorithm}. We stress that in our implementation the QR decomposition employed in the whitening procedure was updated from one iteration to the next instead of recomputing it, therefore causing only a modest computational cost increase, for $s$ significantly smaller than $n$.}

\begin{example}\label{ex:whitening}%
{\color{black}In this example,} $A$ is obtained by discretizing the convection-diffusion operator
\begin{equation}\label{eq:condiff_op}
    \mathcal{L}(u)=-\nu\Delta u+\vec{w}\cdot u,
\end{equation}
on the unit square $[0,1]^2$ by centered finite differences. We use viscosity parameter $\nu = 10^{-2}$, the convection field $\vec{w}=(\frac{3}{2}y(1-x^2),-3x(1-y^2))$, and $N = 50$ discretization points in each spatial direction. The resulting matrix $A$ is highly non-normal with extremely ill-conditioned eigenvector basis, $\kappa(X) \approx 1.5 \cdot 10^{22}$. We aim to approximate the action of the matrix exponential $\exp(-A)b$, where $b$ is the normalized vector of all ones. 

We begin by comparing different possible implementations of sFOM, using a truncation parameter $k = 2$ in the generation of the non-orthogonal Krylov basis $U_d$ and a sketching dimension $s = 400$. Specifically, we compare the whitened-sketched sFOM approximation~\eqref{eq:sfom_whitened_hessenberg} with the version~\eqref{eq:sfom_Hrh} based on the rank one modification $H_d +re_d^T$. Due to the bad balancing of $H_d + re_d^T$, the computation of $\exp(H_d+re_d^T)$ might be extremely sensitive to round-off, also depending on which algorithm is used for the matrix exponential; cf.~also the discussion in Remark~\ref{rem:balancing}. We therefore compare two different versions: one which directly uses the built-in MATLAB function \texttt{expm} (which implements the scaling-and-squaring method from~\cite{almohyhigham2010}) and one which first computes a Schur decomposition of $A$ and then applies \texttt{expm} only to the triangular factor. Note that this can lead to better numerically stability for two reasons: First, the triangular factor of the Schur decomposition will in general be better balanced than $H_d+re_d^T$, and second, \texttt{expm} contains several implementation tricks to specifically improve the behavior for triangular inputs; see section~2 in~\cite{almohyhigham2010}. For the whitened-sketched Arnoldi method, due to the better balancing, using either \texttt{expm} directly or working with a Schur decomposition gives virtually identical results so that we do not report separate curves here.

\begin{figure}[tb]
    \centering
    \includegraphics[width=.49\textwidth]{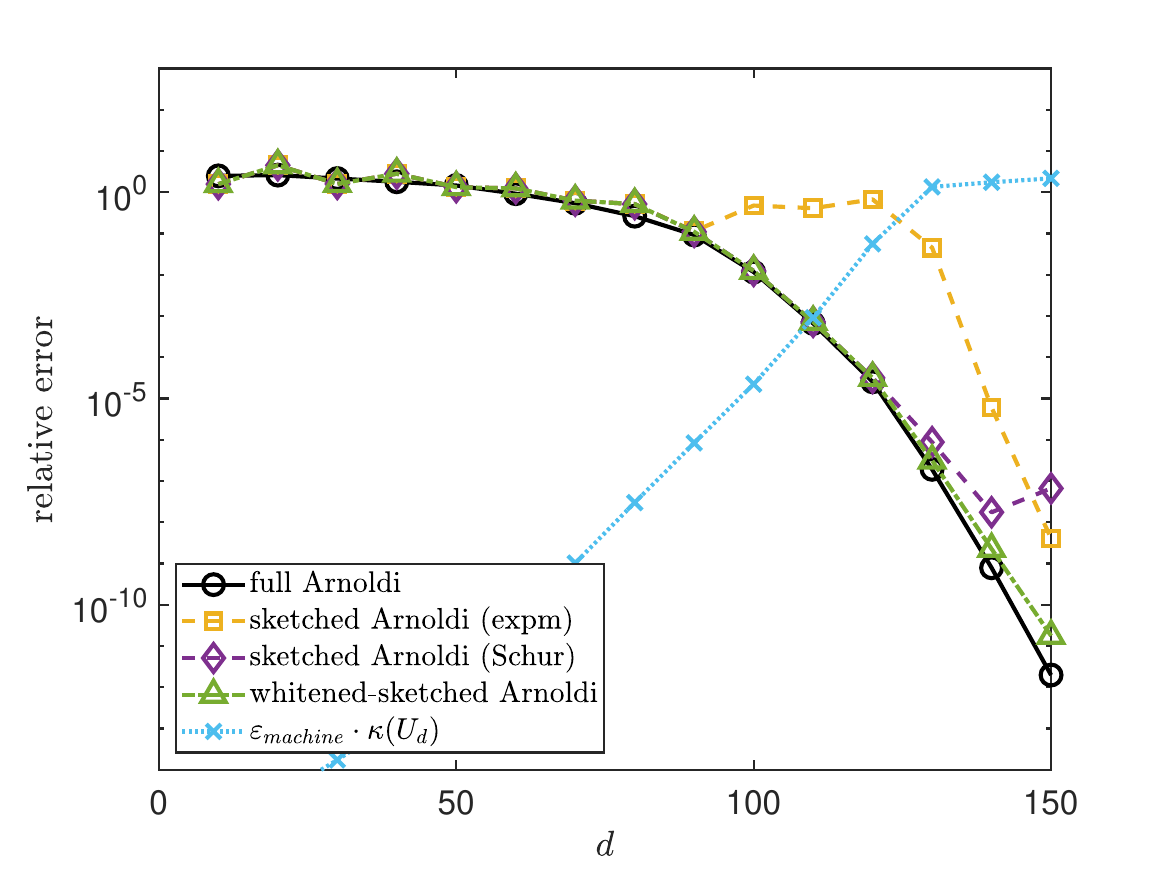}
    \includegraphics[width=.49\textwidth]{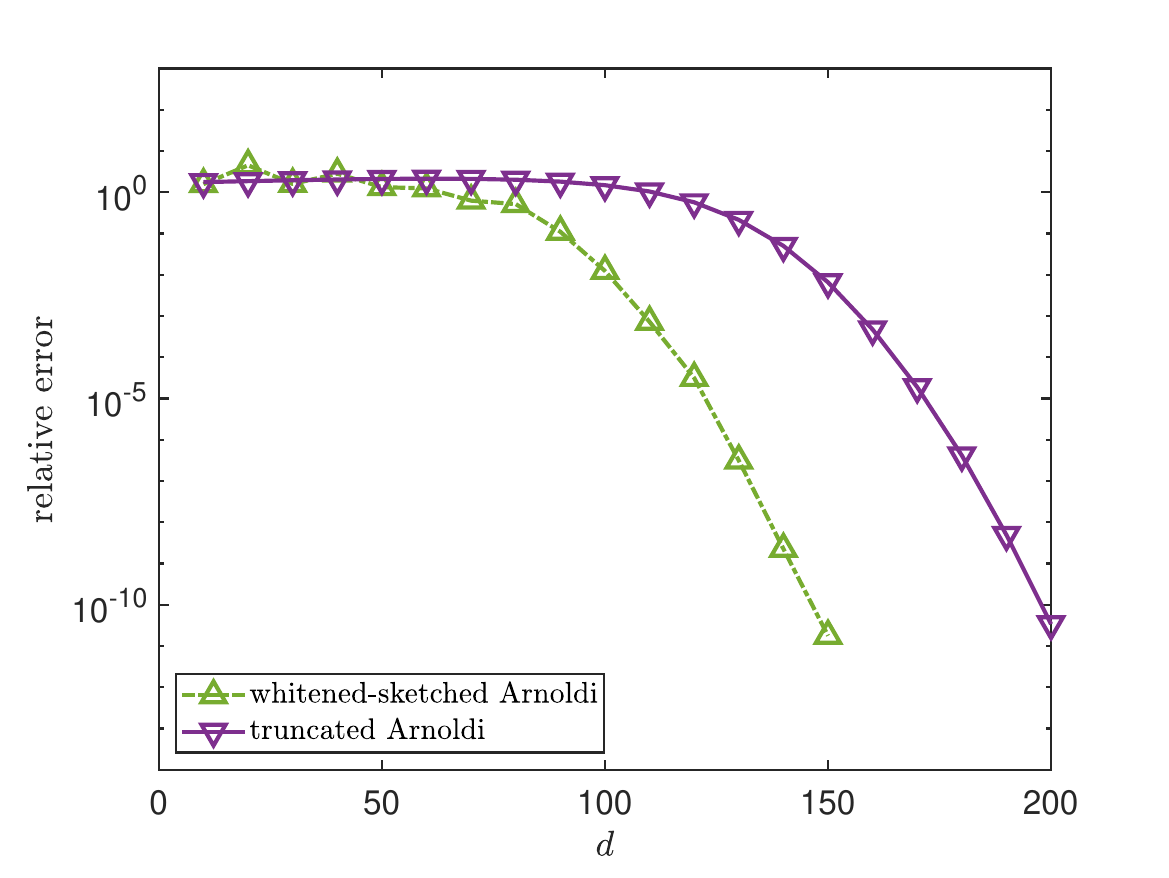}
     \caption{\emph{Left:} Comparison of different versions of the sFOM approximation with full Arnoldi for approximating the action of the matrix exponential. The conditioning of the non-orthogonal Krylov basis $U_d$ is also depicted. \emph{Right:} Comparison of ``best'' sFOM implementation (i.e., whitened-sketched) with truncated Arnoldi. In both cases, the truncation parameter is $k = 2$ and the sketching parameter is $s = 400$.}
     \label{fig:matfun_comparison}
\end{figure}

The results of this experiment are depicted on the left-hand side of Figure~\ref{fig:matfun_comparison}. It can be clearly observed that the whitened-sketched method closely follows the convergence of the full Arnoldi method, with only a very small deviation towards the end of the process. In contrast, both versions of sFOM based on the rank-one modification~\eqref{eq:sfom_Hrh} show signs of instability. While all sketched methods behave identical for the first 90 iterations, the version based on \texttt{expm} becomes very unstable after that and shows increasing/stagnating error norms for about 50 iterations before continuing to converge after that. The version based on the Schur decomposition longer behaves similarly to the whitened-sketched method, but also starts to deviate around the 130th iteration, when reaching a relative error norm of about $10^{-8}$.

It is a common conception (see, e.g.,~\cite{NakatsukasaTropp2021}) that convergence of sketched methods typically deteriorates once the non-orthogonal basis becomes numerically linearly dependent (i.e., once it has a condition number larger than $\varepsilon_{machine}^{-1}$, the reciprocal of the unit round-off). Our experiment clearly reveals that this is not necessarily true and that there are also other factors which have an influence. For sFOM using \texttt{expm}, stability problems occur long before the basis becomes severely ill-conditioned, and the whitened-sketched method continues to converge smoothly also once this happens. Only for sFOM using a Schur decomposition, the linear dependence of the basis and occurrence of stability problems coincide.

In the right part of Figure~\ref{fig:matfun_comparison}, we compare the best performing of the sFOM variants, i.e., the whitened-sketched version~\eqref{eq:sfom_whitened_hessenberg}, to truncated Arnoldi~\eqref{eq:trfom} using the same truncation parameter $k=2$. We can observe that in truncated Arnoldi, convergence is significantly delayed. In the initial 110 iterations of the method, a plateau of stagnating (or even slightly increasing) error norms occurs. After that, convergence begins, and the convergence rate is only slightly slower than that of the full (and whitened-sketched) method. The truncated method reaches the target relative error norm $10^{-11}$ after 200 iterations. Thus, the sketched-whitened method uses 25\% fewer matrix-vector products in this case.
\end{example}

\section{Conclusions}\label{Conclusions}
We have provided a new analysis of sketched Krylov subspace methods. In particular, we have derived a new \emph{sketched Arnoldi relation}, which applies to any Krylov method that combines subspace embeddings with a truncated orthogonalization process. This relation can be used to obtain new insights into the behavior of these methods and better explains some phenomena that are frequently observed, e.g., that methods continue to perform well even if they produce spurious Ritz values far outside the field of values of $A$.

To demonstrate this, we have specifically focused on the application of sketching to the approximation of $f(A)b$, the action of a matrix function on a vector. We have derived new formulas for expressing certain rank-one modifications of a matrix function via divided differences and then used those to compare sketched and truncated Krylov approximations to the standard (full) Arnoldi approximation. In particular, we could prove that for functions like the exponential, for growing Krylov dimension $d$, the sketched Arnoldi approximation is guaranteed to converge to the full Arnoldi approximation, and the nature of our bound suggests that one can expect this convergence to take place at roughly the same speed as convergence of the full Arnoldi approximation to $f(A)b$.

Our focus in this work was on deepening the theoretical understanding of existing sketching approaches, not on introducing new algorithmic concepts. Therefore, we did not perform extensive, large-scale numerical experiments, as thorough experimental studies illustrating the potential of sketched Krylov methods for matrix functions already exist in the literature; see, e.g., \cite{CortinovisKressnerNakatsukasa2022,GuettelSchweitzer2022}. 

In~\cite{PalittaSchweitzerSimoncini2023bis} we employ the sketched Arnoldi approximation derived here for analyzing and implementing sketched and truncated Krylov subspace methods for efficiently solving matrix Sylvester and Lyapunov equations.

\bmsection*{Acknowledgments}
The first and third authors are members of the INdAM Research Group GNCS that partially supported this work through the funded project GNCS2022 ``Tecniche Avanzate per Problemi Evolutivi: Discretizzazione, Algebra Lineare Numerica, Ottimizzazione''  (CUP\_E55F22000270001).

\bmsection*{Financial disclosure}

None reported.

\bmsection*{Conflict of interest}

The authors declare no potential conflict of interests.

\bibliography{refs}

\appendix

\bmsection{Pseudocode for sketched-and-truncated Arnoldi method}\label{appendix:algorithm}
We report here the pseudocode illustrating the sketch-and-truncated Krylov subspace method for the approximation of $f(A)b$. 

We stress that adopting the whitening (lines \ref{white:line1}--\ref{white:line2} of Algorithm~\ref{alg:whitening_krylov}) does not remarkably increase the overall computational cost of the procedure. Indeed, assuming the sketched basis $SU_d$ and its QR factors $Q_d$ and $T_d$ have been stored (these are small dimensional matrices), line~\ref{white:line1} requires the application of the sketching to $u_{d+1}$, whose cost depends on the selected sketching but it is often only polylogarithmic in $n$, and the update of the QR factorization of $[SU_d,Su_{d+1}]$, which can be computed in $O(sd)$ flops by, e.g., a Gram-Schmidt procedure.
Moreover, we can fully take advantage of the upper Hessenberg, banded structure of $H_d$ and the triangular pattern of $T_d$ to reduce the cost of updating $\widehat H_d$ in line~\ref{white:line2}; {\color{black} see section 5 in~\cite{PalittaSchweitzerSimoncini2023bis}}.

Notice that Algorithm~\ref{alg:whitening_krylov} can be additionally enhanced with a two-pass strategy to avoid storing the whole basis $U_d$. If this strategy is implemented, step 10 is postponed to after the end of the loop, and step 11 is reduced to store only the last $k+1$ vectors. Finally, an ad-hoc stopping criterion tailored to the function $f$ at hand can be included in the for-loop.

\begin{algorithm}[t]
\caption{Sketched-and-truncated Krylov method for $f(A)b$}\label{alg:whitening_krylov}
\begin{algorithmic}[1]
\smallskip

\Statex \textbf{Input:} $A\in\mathbb{R}^{n\times n}$, $b\in\mathbb{R}^{n}$, $S\in\mathbb{R}^{s\times n}$, integers $0<k<\text{maxit}\ll n$\  
\Statex \textbf{Output:} $f_d^{\textsc{sk}}\approx f(A)b$
\State Set $U_1=u_1=b/\|b\|$, $Q_1=SU_1/\|SU_1\|$, $T_1=\|SU_1\|$
 \For{$d=1,\ldots,\text{maxit}$}
\State Compute $\widetilde u=Au_d$
\For{$i=\max\{1,d-k+1\},\ldots,d$}
\State Set $\widetilde u=\widetilde u- u_ih_{i,d}$, where $h_{i,d}= \widetilde u^Tu_i$ 
\EndFor
\State Set $h_{d+1,d}=\|\widetilde u\|$ and $u_{d+1}=\widetilde u/h_{d+1,d}$
\State Update skinny QR: $Q_{d+1}T_{d+1}=[SU_{d},Su_{d+1}]$ \label{white:line1}
\State Update $\widehat H_d=T_dH_dT_d^{-1}$ and set $\widehat t=(h_{d+1,d}/\tau_d)t$ \label{white:line2}
\State Compute $f_d^{\textsc{sk}}=U_d(T_d^{-1}(f(\widehat{H}_d + \widehat{t}e_d^T)e_1\|Sb\|))$
\State Set $U_{d+1}=[U_d,u_{d+1}]$
\EndFor

\end{algorithmic}
\end{algorithm}

\bmsection{Proofs.}\label{appendixB}
{\color{black} In this section we report the proofs of Theorem~\ref{th:errorf} and Theorem~\ref{th:ferror_DD}.

\begin{proof}[Proof of Theorem~\ref{th:errorf}]
We follow the steps in Theorem~2.1 of \cite{IlicTurnerSimpson2010}. We first write
\begin{eqnarray*}
& &\hspace{-2cm} (zI - M - v e_d^T)^{-1} e_1 - (zI - M)^{-1} e_1 \\
&=& (zI - M)^{-1} [ (zI - M)(zI - M - v e_d^T)^{-1}e_1 - e_1 ]\\
&=& (zI - M)^{-1} v\,\, [e_d^T (zI - M - v e_d^T)^{-1} e_1].
\end{eqnarray*}
Using the eigendecomposition of $M + v e_d^T$, we have 
$$e_d^T (zI - M - v e_d^T)^{-1} e_1=\sum_{i=1}^d \frac{\alpha_i \beta_i}{z-\lambda_i}.$$
Let $\Gamma$ be a closed curve enclosing the eigenvalues of $M + v e_d^T$ and
of $M$. Then
\begin{align}
&\hspace{-1.5cm} f(M + v e_d^T)e_1 - f(M)e_1 \nonumber\\
&=
\frac 1 {2\pi \imath} \int_\Gamma f(z) \left ( (zI - M - v e_d^T)^{-1} e_1 - (zI - M)^{-1} e_1 \right ) dz\nonumber\\
&= \frac 1 {2\pi \imath} 
\int_\Gamma f(z) (zI - M)^{-1}  v (e_d^T (zI - M - v e_d^T)^{-1} e_1) dz\label{eq:integral_representation}\\
&= \frac 1 {2\pi \imath} \sum_{i=1}^d \alpha_i \beta_i \int_\Gamma f(z)\frac 1 {z-\lambda_i} (zI - M)^{-1}dz\ v\nonumber\\
&= \sum_{i=1}^d \alpha_i \beta_i f[M, \lambda_i] v\nonumber\\
&= g_v(M) v,\label{eq:closed_form_g}
\end{align}
where we used the contour integral representation of divided differences (see, e.g., Equation~(51) in~\cite{DeBoor2005}) in the second to last equality.
\end{proof}

\begin{remark}
We note that in Lemma~2.2 of~\cite{beckermann2018low} a conceptually different representation for rank-one updates of matrix functions was found---also based on the integral representation~\eqref{eq:integral_representation}---but that this representation is less useful for our analysis.
\end{remark}

We then proceed with showing Theorem~\ref{th:ferror_DD}.

\begin{proof}[Proof of Theorem~\ref{th:ferror_DD}]
Following the proof of Theorem~\ref{th:errorf}, formula (\ref{eq:integral_representation}) dwells with
the quantity
\begin{equation}\label{eq:integral_proof42}
	{\cal E }:=\int_\Gamma f(z)(zI-M)^{-1} v (e_d^T(zI - M - v e_d^T)^{-1}e_1) \ \, dz.
\end{equation}
Now, let $\widehat M = M + v e_d^T$ and $\widehat M_z = z I - \widehat M$, where $z \in {\mathbb C}$ is such that $z I - \widehat M$ is nonsingular. Further denote by $\phi_d(z) = z^d + c_{d-1}z^{d-1}+\dots ü c_1z+c_0$ the characteristic polynomial of $\widehat{M}_z$, so that by the Cayley-Hamilton theorem, we have $\phi_d(\widehat M_z) = \widehat M_z^d + c_{d-1} \widehat M_z^{d-1} + \ldots + 
c_1 \widehat M_z + c_0 I = 0$, with $c_0=(-1)^d {\rm det}(\widehat M_z) $. Then we have
$$
  \widehat M_z^{-1} =
\frac {-1}{c_0}\left (\widehat M_z^{d-1} + c_{d-1} \widehat M_z^{d-2} + \ldots +  c_1 I_d \right ).
$$
Since $M$, and thus $\widehat M_z$, is upper Hessenberg, it holds
$$
e_d^T \widehat M_z^{-1}e_1 = \frac{(-1)^{d+1}}{ {\rm det}(\widehat M_z)} e_d^T \widehat M_z^{d-1} e_1= \frac{(-1)^{d+1}}{ {\rm det}(\widehat M_z)} \prod_{j=1}^{d-1}\widehat M_{j+1,j}=\frac{(-1)^{2d}}{ {\rm det}(\widehat M_z)} \prod_{j=1}^{d-1} M_{j+1,j}=\frac{1}{ {\rm det}(\widehat M_z)} \prod_{j=1}^{d-1}M_{j+1,j}.
$$
Moreover,
$$
{\rm det}(\widehat M_z) = \prod_{j=1}^d \lambda_j(\widehat M_z) = \prod_{j=1}^d (z-\lambda_j),
$$
where $\lambda_j$ are the eigenvalues of $\widehat M$.
Therefore,
$$
e_d^T(zI - M - v e_d^T)^{-1}e_1 = 
\prod_{j=1}^{d-1}M_{j+1,j}
\prod_{i=1}^d (z - \lambda_i)^{-1}.
$$
We can thus write the integral in~\eqref{eq:integral_proof42} as
$$
{\cal E }=\prod_{j=1}^{d-1}M_{j+1,j}
\int_\Gamma \frac{f(z)}{\omega(z)}(zI-M)^{-1} v \,dz.
$$
To obtain (\ref{eqn:ferrnew1}), we write the partial
fraction expansion of $1/\omega(z)$, that {for distinct eigenvalues is given by}
$$
\frac{1}{\omega(z)}=\sum_{i=1}^d 
\frac{1}{\omega^\prime(\lambda_i)} \frac{1}{z-\lambda_i},
$$
where the prime denotes differentiation of $\omega(z)$ with respect to $z$,
yielding
$$
\int_\Gamma \frac{f(z)}{\omega(z)}(zI-M)^{-1} v  \,dz =
\sum_{i=1}^d  \frac{1}{\omega^\prime(\lambda_i)} 
\int_\Gamma \frac{f(z)}{z-\lambda_i} (zI-M)^{-1} v \, dz.
$$
Using once again the contour integral representation of divided 
differences and collecting the other terms, the formula (\ref{eqn:ferrnew1}) is proved.

To obtain (\ref{eqn:ferrnew2}) we use the integral representation~\eqref{eq:integral_higher_order_divdiff} to write
$$
\int_\Gamma \frac{f(z)}{\omega(z)}(zI-M)^{-1}  dz =
f[M,\lambda_1, \ldots, \lambda_d],
$$
or, equivalently, we observe that 
$$
 \frac 1 {2\pi \imath} 
 \sum_{i=1}^d \frac{1}{\omega^\prime(\lambda_i)} f[z,\lambda_i]  = 
 f[z,\lambda_1, \ldots, \lambda_d],
$$
see, e.g., Equation~(1.1) in~\cite{Curtiss.62}.
\end{proof}

}
\end{document}